\newtheorem{thm}{Theorem}[section]
\newtheorem{lemma}[thm]{Lemma}
\newtheorem{cor}[thm]{Corollary}
\newtheorem{remark}[thm]{Remark}
\newtheorem{example}[thm]{Example}
\newtheorem{problem}[thm]{Problem}
\numberwithin{equation}{section}
\def\diag{\mathrm{diag}}
\def\<{\langle}
\def\>{\rangle}
\def\bC{\mathbb{C}}
\def\lin{\mathrm{span}}
\def\Tr{\mathrm{Tr}\,}
\def\bM{\mathbb{M}}
\def\cM{\mathcal{M}}
\def\ran{\mathrm{ran}\,}
\def\bN{\mathbb{N}}
\def\cR{\mathcal{R}}
\def\bR{\mathbb{R}}
\def\eps{\varepsilon}
\def\ffi{\varphi}
\begin{document}
\baselineskip=16pt
\allowdisplaybreaks

\centerline{\LARGE Matrix limit theorems of Kato type}
\medskip
\centerline{\LARGE related to positive linear maps and operator means}
\bigskip
\bigskip
\centerline{\Large Fumio Hiai\footnote{{\it E-mail address:} hiai.fumio@gmail.com}}

\medskip
\begin{center}
$^1$\,Tohoku University (Emeritus), \\
Hakusan 3-8-16-303, Abiko 270-1154, Japan
\end{center}

\medskip
\begin{abstract}
We obtain limit theorems for $\Phi(A^p)^{1/p}$ and $(A^p\sigma B)^{1/p}$ as $p\to\infty$
for positive matrices $A,B$, where $\Phi$ is a positive linear map between matrix algebras
(in particular, $\Phi(A)=KAK^*$) and $\sigma$ is an operator mean (in particular, the
weighted geometric mean), which are considered as certain reciprocal Lie-Trotter formulas
and also a generalization of Kato's limit to the supremum $A\vee B$ with respect to the
spectral order.

\bigskip\noindent
{\it 2010 Mathematics Subject Classification:}
Primary 15A45, 15A42, 47A64

\medskip\noindent
{\it Key words and phrases:}
positive semidefinite matrix, Lie-Trotter formula, positive linear map, operator mean,
operator monotone function, geometric mean, antisymmetric tensor power, R\'enyi relative
entropy
\end{abstract}

\section{Introduction}

For any matrices $X$ and $Y$, the well-known Lie-Trotter formula is the convergence
$$
\lim_{n\to\infty}(e^{X/n}e^{Y/n})^n=e^{X+Y}.
$$
The symmetric form with a continuous parameter is also well-known for positive semidefinite
matrices $A,B\ge0$ as
\begin{equation}\label{F-1.1}
\lim_{p\searrow0}(A^{p/2}B^pA^{p/2})^{1/p}=P_0\exp(\log A\dot+\log B),
\end{equation}
where $P_0$ is the orthogonal projection onto the intersection of the supports of $A,B$ and
$\log A\dot+\log B$ is defined as $P_0(\log A)P_0+P_0(\log B)P_0$. When $\sigma$ is an
operator mean \cite{KA} corresponding to an operator monotone function $f$ on $(0,\infty)$
such that $\alpha:=f'(1)$ is in $(0,1)$, the operator mean version of the
Lie-Trotter formula is also known to hold as
\begin{equation}\label{F-1.2}
\lim_{p\searrow0}(A^p\sigma B^p)^{1/p}=P_0\exp((1-\alpha)\log A\dot+\alpha\log B)
\end{equation}
for matrices $A,B\ge0$. In particular, when $\sigma$ is the geometric mean $A\#B$
(introduced first in \cite{PW} and further discussed in \cite{KA}) corresponding to the
operator monotone function $f(x)=x^{1/2}$, \eqref{F-1.2} yields
\begin{equation}\label{F-1.3}
\lim_{p\searrow0}(A^p\#B^p)^{2/p}=P_0\exp(\log A\dot+\log B),
\end{equation}
which has the same right-hand side as \eqref{F-1.1}. Due to the Araki-Lieb-Thirring
inequality and the Ando-Hiai log-majorization \cite{Ar,AH}, it is worthwhile to note that 
$(A^{p/2}B^pA^{p/2})^{1/p}$ and $(A^p\#B^p)^{2/p}$ both tend to
$P_0\exp(\log A\dot+\log B)$ as $p\searrow0$, with the former decreasing and
the latter increasing in the log-majorization order (see \cite{AH} for details on
log-majorization for matrices).

In the previous paper \cite{AuHi2}, under the name ``reciprocal Lie-Trotter formula", we
considered the question complementary to \eqref{F-1.1} and \eqref{F-1.3}, that is, about
what happens to the limits of $(A^{p/2}B^pA^{p/2})^{1/p}$ and $(A^p\#B^p)^{2/p}$ as $p$
tends to $\infty$ instead of $0$. If $A$ and $B$ are commuting, then
$(A^{p/2}B^pA^{p/2})^{1/p}=(A^p\#B^p)^{2/p}=AB$, independently of $p>0$. However, if $A$
and $B$ are not commuting, then the question is rather complicated. Indeed, although we can
prove the existence of the limit $\lim_{p\to\infty}(A^{p/2}B^pA^{p/2})^{1/p}$, the
description of the limit has a rather complicated combinatorial nature. Moreover, it is
unknown so far whether the limit of $(A^p\#B^p)^{2/p}$ as $p\to\infty$ exists or not. In
the present paper, we consider a similar (but seemingly a bit simpler) question about what
happens to the limits of $(BA^pB)^{1/p}$ and $(A^p\#B)^{1/p}$ as $p$ tends to $\infty$, the
case where $B$ is fixed without the $p$-power, in certain more general settings.

The rest of the paper is organized as follows. In Section 2, we first prove the existence of
the limit of $(KA^pK^*)^{1/p}$ as $p\to\infty$ and give the description of the limit in terms
of the diagonalization (eigenvalues and eigenvectors) data of $A$ and the images of the
eigenvectors by $K$. We then extend the result to the limit of $\Phi(A^p)^{1/p}$ as
$p\to\infty$ for a positive linear map $\Phi$ between matrix algebras. For instance, this
limit is applied to the map $\Phi(A\oplus B):=(A+B)/2$ to reformulate Kato's limit theorem
$((A^p+B^p)/2)^{1/p}\to A\vee B$ in \cite{Ka}. Another application is given to find the limit
formula as $\alpha\searrow0$ of the sandwiched $\alpha$-R\'enyi divergence \cite{MDSFT,WWY},
a new relative entropy relevant to quantum information theory. In Section 3, we discuss
the limit behavior of $(A^p\sigma B)^{1/p}$ as $p\to\infty$ for operator means $\sigma$. To
do this, we may assume without loss of generality that $B$ is an orthogonal projection $E$.
Under a certain condition on $\sigma$, we prove that $(A^p\sigma E)^{1/p}$ is decreasing as
$1\le p\to\infty$, so that the limit as $p\to\infty$ exists. Furthermore, when $\sigma$ is
the the weighted geometric mean, we obtain an explicit description of the limit in terms of
$E$ and the spectral projections of $A$.

It is worth noting that a limit formula in the same vein as those in \cite{Ka} and
this paper was formerly given in \cite{ACS1,ACS2} for the spectral shorting operation.

\section{$\lim_{p\to\infty}\Phi(A^p)^{1/p}$ for positive linear maps $\Phi$}

For each $n\in\bN$ we write $\bM_n$ for the $n\times n$ complex matrix algebra and $\bM_n^+$
for the set of positive semidefinite matrices in $\bM_n$. When $A\in\bM_n$ is positive
definite, we write $A>0$. We denote by $\Tr$ the usual trace functional on $\bM_n$. For
$A\in\bM_n^+$, $\lambda_1(A)\ge\dots\ge\lambda_n(A)$ are the eigenvalues of $A$ in decreasing
order with multiplicities, and $\ran A$ is the range of $A$.

Let $A\in\bM_n^+$ be given, whose diagonalization is
\begin{align}\label{F-2.1}
A=V\diag(a_1,\dots,a_n)V^*=\sum_{i=1}^na_i|v_i\>\<v_i|
\end{align}
with the eigenvalues $a_1\ge\dots\ge a_n$ and a unitary matrix $V=[v_1\ \cdots\ v_n]$ so
that $Av_i=a_iv_i$ for $1\le i\le d$. Let $K\in\bM_n$ and assume that $K\ne0$ (our problem
below is trivial when $K=0$). Consider the sequence of vectors $Kv_1,\dots,Kv_n$ in $\bC^n$.
Let $1\le l_1<l_2<\dots<l_m$ be chosen so that if $l_{k-1}<i<l_k$ for $1\le k\le m$, then
$Kv_i$ is in $\lin\{Kv_{l_1},\dots,Kv_{l_{k-1}}\}$ (this means, in particular, $Kv_i=0$ if
$i<l_1$). Then $\{Kv_{l_1},\dots,Kv_{l_m}\}$ is a linearly independent subset of
$\{Kv_1,\dots,Kv_n\}$, so we perform the Gram-Schmidt orthogonalization to obtain an
orthonormal vectors $u_1,\dots,u_m$ from $Kv_{l_1},\dots,Kv_{l_m}$. In particular,
$u_1=Kv_{l_1}/\|Kv_{l_1}\|$. The next theorem is our first limit theorem. This implicitly
says that the right-hand side of \eqref{F-2.2} is independent of the expression of
\eqref{F-2.1} (note that $v_i$'s are not unique for degenerate eigenvalues $a_i$).

\begin{thm}\label{T-2.1}
We have
\begin{equation}\label{F-2.2}
\lim_{p\to\infty}(KA^pK^*)^{1/p}=\sum_{k=1}^ma_{l_k}|u_k\>\<u_k|,
\end{equation}
and in particular,
\begin{equation}\label{F-2.3}
\lim_{p\to\infty}\lambda_k((KA^pK^*)^{1/p})
=\begin{cases}a_{l_k}, & 1\le k\le m, \\
0, & m<k\le n.\end{cases}
\end{equation}
\end{thm}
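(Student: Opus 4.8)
The plan is to reduce the computation of $(KA^pK^*)^{1/p}$ to an analysis of the singular values of $A^{p/2}K^*$, ordered according to the eigenvalues of $A$, and then to apply a perturbation argument as $p\to\infty$. First I would write $A^pK^* = \sum_{i=1}^n a_i^p |v_i\>\<v_i|K^* = \sum_{i=1}^n a_i^p |v_i\>\<Kv_i|$ (up to taking adjoints), so that $KA^pK^* = \sum_{i,j} a_i^{p/2}a_j^{p/2}\<Kv_i,Kv_j\>\,|Kv_i\>\<Kv_j|$; equivalently, if $C_p := A^{p/2}K^*$, then $KA^pK^* = C_p^*C_p$ has the same nonzero eigenvalues as $C_pC_p^*$, and $\|(KA^pK^*)^{1/p}\|$-type quantities are governed by the Rayleigh quotients $\<C_p^*C_p\,x,x\>/\<x,x\>$. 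The key structural point is that the dominant contribution to $C_p$ as $p\to\infty$ comes from the largest eigenvalues $a_i$ whose eigenvectors $v_i$ contribute \emph{new} directions to $\ran K$ — precisely the indices $l_1<\dots<l_m$ singled out before the theorem, because for $l_{k-1}<i<l_k$ the vector $Kv_i$ lies in the span already generated, so its $a_i^p$-weighted contribution is, after the change of basis to $u_1,\dots,u_k$, subdominant to the $a_{l_k}^p$ term (note $a_i\le a_{l_k}$ there, with the relevant component already "used up").

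Concretely, I would proceed by induction on $m$, peeling off the top eigenvalue $a_{l_1}$. Let $Q_1$ be the orthogonal projection onto $u_1 = Kv_{l_1}/\|Kv_{l_1}\|$ and $Q_1^\perp = I - Q_1$. One shows $Q_1(KA^pK^*)Q_1 = a_{l_1}^p\bigl(\|Kv_{l_1}\|^2 + o(1)\cdot\text{(lower order in }a_{l_1}^{-p}a_i^p)\bigr)|u_1\>\<u_1| + (\text{cross terms})$, while $Q_1^\perp (KA^pK^*)Q_1^\perp$ is, up to a controlled error, a copy of the same problem with $K$ replaced by $Q_1^\perp K$ and the eigenvalue list starting from $a_{l_2}$ — because $Q_1^\perp Kv_i$ for $i<l_2$ is a multiple of $u_1$ killed by $Q_1^\perp$ (after accounting for the earlier $Kv_j$'s which only involve $u_1$). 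The off-diagonal block $Q_1(KA^pK^*)Q_1^\perp$ has norm of order $a_{l_1}^{p/2}\cdot a_{l_2}^{p/2}$, which is $o(a_{l_1}^p)$ since $a_{l_2}\le a_{l_1}$ (and when $a_{l_2}<a_{l_1}$ strictly it is genuinely smaller; the borderline case $a_{l_2}=a_{l_1}$ needs separate care — see below). Taking $p$-th roots, using that $X\mapsto X^{1/p}$ is operator monotone and the elementary bound $\|(X+Y)^{1/p}-X^{1/p}\|$ is small when $\|Y\|/\|X\|\to0$ along the relevant scale (or more robustly, a Weyl/Lidskii-type inequality for eigenvalues of $(KA^pK^*)^{1/p}$ applied blockwise), I would conclude $\lambda_1((KA^pK^*)^{1/p})\to a_{l_1}$ with corresponding eigenvector converging to $u_1$, and the compression to $u_1^\perp$ reduces to the $(m-1)$-case.

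The main obstacle I anticipate is the bookkeeping when several of the $a_{l_k}$ coincide, i.e.\ degenerate eigenvalues straddling distinct "new-direction" indices: then the naive off-diagonal estimate $a_{l_1}^{p/2}a_{l_2}^{p/2} = o(a_{l_1}^p)$ fails, and one cannot separate the blocks by a crude norm comparison. The clean way around this is to group indices by the \emph{value} of the eigenvalue rather than peel one $l_k$ at a time: fix a value $a$, let $S_a = \{i : a_i = a\}$, and analyze $\sum_{i\in S_a} a^p |Kv_i\>\<Kv_i|$ together — its range, rescaled by $a^{-p}$, stabilizes (it equals the fixed subspace $\lin\{Kv_i : i\in S_a\}$ up to the already-generated part), and across distinct values there genuinely \emph{is} a spectral gap in the $a_i^p$ scale, so the block-diagonalization argument goes through value-by-value. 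Summing over values, and matching the surviving orthonormal directions inside each $\lin\{Kv_i: i\in S_a\}\ominus(\text{earlier span})$ with the Gram–Schmidt vectors $u_k$ for $l_k\in S_a$, yields \eqref{F-2.2}; the eigenvalue statement \eqref{F-2.3} is then immediate since the limiting operator in \eqref{F-2.2} has rank $m$ with eigenvalues exactly $a_{l_1},\dots,a_{l_m}$ (with multiplicity), padded by zeros. A secondary technical point is justifying that convergence of the matrices $(KA^pK^*)^{1/p}$ (not merely of their eigenvalues) follows once the spectral projections onto each eigenvalue-group are shown to converge, which is a standard consequence of the gap and the spectral theorem.
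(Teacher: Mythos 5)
Your proposal is essentially correct but takes a genuinely different route from the paper's. The paper first proves the eigenvalue statement \eqref{F-2.3} by a trace argument for $\lambda_1$ and then lifts it to all $k$ via antisymmetric tensor powers, using $Z_p^{\wedge k}=((K^{\wedge k})(A^{\wedge k})^p(K^{\wedge k})^*)^{1/p}$ and the fact that $Kv_{i_1}\wedge\cdots\wedge Kv_{i_k}\ne0$ exactly when $\{Kv_{i_1},\dots,Kv_{i_k}\}$ is linearly independent (this is where the indices $l_k$ arise); the convergence of the spectral projections in \eqref{F-2.2} is then extracted from convergence of the top eigenprojections of the wedge powers together with \cite[Lemma 2.4]{AuHi2}. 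You instead work directly with the rank-one decomposition $KA^pK^*=\sum_i a_i^p|Kv_i\>\<Kv_i|$, group the terms by the distinct eigenvalue values $c_1>\cdots>c_t$ into fixed matrices $T_j:=\sum_{i:a_i=c_j}|Kv_i\>\<Kv_i|$, and exploit the exponential separation of the scales $c_j^p$. Your grouped version is the right one and can be completed cleanly: with $Y_p^{(j)}:=\sum_{j'\le j}c_{j'}^pT_{j'}$ one has $Y_p^{(j)}\ge c_j^p(T_1+\cdots+T_j)$ and $\|Z_p^p-Y_p^{(j)}\|=O(c_{j+1}^p)=o(c_j^p)$, so Weyl's inequality yields \eqref{F-2.3} with the correct multiplicities $r_j:=\mathrm{rank}(T_1+\cdots+T_j)$, and perturbation of the top-$r_j$ spectral projection of $Y_p^{(j)}$ --- which is exactly the projection onto $\ran(T_1+\cdots+T_j)=\lin\{u_1,\dots,u_{r_j}\}$ by the Gram--Schmidt construction --- yields \eqref{F-2.2}. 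This is arguably more elementary than the wedge-power argument and passes essentially unchanged to Theorem \ref{T-2.6}; what the paper's approach buys is that the scalar case $k=1$ does all the work, with no multi-level perturbation theory.

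Two points in your writeup need repair, though neither is fatal. First, the identity $KA^pK^*=\sum_{i,j}a_i^{p/2}a_j^{p/2}\<Kv_i,Kv_j\>\,|Kv_i\>\<Kv_j|$ is false (that expression equals $(KA^{p/2}K^*)^2$); the correct and needed identity is $KA^pK^*=\sum_ia_i^p|Kv_i\>\<Kv_i|$, which is what you actually use afterwards. Second, your first scheme of peeling off one $l_k$ at a time fails below the top level even when all the $a_{l_k}$ are distinct: the coupling $Q_1(KA^pK^*)Q_1^\perp$ is of order $a_{l_2}^p$, the same order as the block you try to resolve next, so the crude estimate ``off-diagonal $=o(a_{l_1}^p)$'' settles only the first group. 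What saves the argument at every level is the comparison with the truncated sums $Y_p^{(j)}$ (equivalently, a Schur-complement estimate in which the correction is $O(c_j^{2p}/c_{j-1}^p)=o(c_j^p)$), and this is precisely what your value-grouped formulation supplies; you should make that the argument rather than a fallback.
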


\begin{proof}
Write $Z_p:=(KA^pK^*)^{1/p}$ and $\lambda_i(p):=\lambda_i(Z_p)$ for $p>0$ and $1\le i\le n$.
First we prove \eqref{F-2.3}. Note that
\begin{align}\label{F-2.4}
Z_p^p&=KA^pK^*=KV\diag(a_1^p,\dots,a_n^p)V^*K^* \nonumber\\
&=\bigl[a_1^pKv_1\ a_2^pKv_2\ \cdots\ a_n^pKv_n\bigr]
\bigl[Kv_1\ Kv_2\ \cdots\ Kv_n\bigr]^*.
\end{align}
Since
\begin{align*}
\lambda_1(p)^p&\le\Tr Z_p^p
=\Tr[Kv_1\ \cdots\ Kv_n\bigr]^*\bigl[a_1^pKv_1\ \cdots\ a_n^pKv_n\bigr] \\
&=\sum_{i=1}^na_i^p\<Kv_i,Kv_i\>\le a_{l_1}^p\sum_{i=1}^n\|Kv_i\|^2,
\end{align*}
we have
$$
\limsup_{p\to\infty}\lambda_1(p)\le a_{l_1}.
$$
Moreover, since
$$
n\lambda_1(p)^p\ge\Tr Z_p^p=\sum_{i=1}^na_i^p\<Kv_i,Kv_i\>
\ge a_{l_1}^p\|Kv_{l_1}\|^2,
$$
we have
$$
\liminf_{p\to\infty}\lambda_1(p)\ge a_{l_1}.
$$
Therefore, \eqref{F-2.3} holds for $k=1$.

To prove \eqref{F-2.3} for $k>1$, we consider the \emph{antisymmetric tensor powers}
$A^{\wedge k}$ and $K^{\wedge k}$ for each $k=1,\dots,n$. Note that
\begin{align}\label{F-2.5}
Z_p^{\wedge k}=\bigl((K^{\wedge k})(A^{\wedge k})^p(K^{\wedge k})^*\bigr)^{1/p}
\end{align}
and
$$
A^{\wedge k}=\sum_{1\le i_1<\dots<i_k\le d}a_{i_1}\cdots a_{i_k}
|v_{i_1}\wedge\dots\wedge v_{i_k}\>\<v_{i_1}\wedge\dots\wedge v_{i_k}|.
$$
The above case applied to $A^{\wedge k}$ and $K^{\wedge k}$ yields that
\begin{align}\label{F-2.6}
&\lim_{p\to\infty}\lambda_1(p)\lambda_2(p)\cdots\lambda_k(p)
=\lim_{p\to\infty}\lambda_1\bigl(Z_p^{\wedge k}\bigr) \nonumber\\
&\quad=\max\bigl\{a_{i_1}\cdots a_{i_k}:1\le i_1<\dots<i_k\le d,\,
K^{\wedge k}(v_{i_1}\wedge\dots\wedge v_{i_k})\ne0\bigr\}.
\end{align}
Since $K^{\wedge k}(v_{i_1}\wedge\dots\wedge v_{i_k})=Kv_{i_1}\wedge\cdots\wedge Kv_{i_k}$
is non-zero if and only if $\{Kv_{i_1},\dots,Kv_{i_k}\}$ is linearly independent, it is
easy to see that \eqref{F-2.6} is equal to $a_{l_1}\cdots a_{l_k}$ if $k\le m$ and equal to
$0$ if $k>m$. Therefore,
$$
\lim_{p\to\infty}\lambda_1(p)\lambda_2(p)\cdots\lambda_k(p)
=\begin{cases}
a_{l_1}\cdots a_{l_k}, & 1\le k\le m, \\
0, & m<k\le n,\end{cases}
$$
which implies \eqref{F-2.3}.

Now, for $p>0$ choose an orthonormal basis $\{u_1(p),\dots,u_n(p)\}$ of $\bC^n$ for which
$Z_pu_i(p)=\lambda_i(p)u_i(p)$ for $1\le i\le n$. To prove \eqref{F-2.2}, write
$\tilde a_k:=a_{l_k}$ for $1\le k\le m$. If $\tilde a_1=0$ then it is obvious that
$\lim_{p\to\infty}Z_p=0$. So assume that $\tilde a_1>0$ and furthermore
$\tilde a_1>\tilde a_2$, i.e., $\lim_{p\to\infty}\lambda_1(p)>\lim_{p\to\infty}\lambda_2(p)$
at the moment. From \eqref{F-2.4} we have
\begin{align*}
Z_p^p&=\sum_{i=1}^na_i^p|Kv_i\>\<Kv_i| \\
&=\sum_{i=l_1}^{l_2-1}a_i^p|Kv_i\>\<Kv_i|
+\sum_{i=l_2}^na_i^p|Kv_i\>\<Kv_i| \\
&=\sum_{i=l_1}^{l_2-1}a_i^p\|Kv_i\|^2|u_1\>\<u_1|
+\sum_{i=l_2}^na_i^p|Kv_i\>\<Kv_i|
\end{align*}
so that
$$
\biggl({Z_p\over\tilde a_1}\biggr)^p
=\sum_{i=l_1}^{l_2-1}\biggl({a_i\over\tilde a_1}\biggr)^p\|Kv_i\|^2|u_1\>\<u_1|
+\sum_{i=l_2}^n\biggl({a_i\over\tilde a_1}\biggr)^p|Kv_i\>\<Kv_i|
\longrightarrow\alpha|u_1\>\<u_1|
$$
as $p\to\infty$ for some $\alpha>0$, since $a_i/\tilde a_1\le\tilde a_2/\tilde a_1<1$ for
$i\ge l_2$. Hence, for any $p>0$ sufficiently large, the largest eigenvalue of
$(Z_p/\tilde a_1)^p$ is simple and the corresponding eigen projection converges to
$|u_1\>\<u_1|$ as $p\to\infty$. Since the eigen projection $E_1(p)$ of $Z_p$ corresponding
to the largest eigenvalue $\lambda_1(p)$ (simple for any large $p>0$) is the same as that
of $(Z_p/\tilde a_1)^p$, we have
$$
E_1(p)=|u_1(p)\>\<u_1(p)|\longrightarrow|u_1\>\<u_1|
\quad\mbox{as $p\to\infty$}.
$$

In the general situation, we assume that $\tilde a_1\ge\dots\ge\tilde a_k>\tilde a_{k+1}$
with $1\le k\le m$, where $\tilde a_{k+1}=0$ if $k=m$. From \eqref{F-2.3} note that
$$
\lim_{p\to\infty}\lambda_1(Z_p^{\wedge k})
=\tilde a_1\cdots\tilde a_{k-1}\tilde a_k
>\tilde a_1\cdots\tilde a_{k-1}\tilde a_{k+1}
=\lim_{p\to\infty}\lambda_2(Z_p^{\wedge k}).
$$
Hence, for any sufficiently large $p>0$, the largest eigenvalue
$\lambda_1(Z_p^{\wedge k})=\lambda_1(p)\cdots\lambda_k(p)$ of $Z_p^{\wedge k}$ is simple,
and from the above case applied to \eqref{F-2.5} it follows that
$$
|u_1(p)\wedge\cdots\wedge u_k(p)\>\<u_1(p)\wedge\cdots\wedge u_k(p)|
\longrightarrow|u_1\wedge\cdots\wedge u_k\>\<u_1\wedge\cdots\wedge u_k|
\quad\mbox{as $p\to\infty$},
$$
since the vector in the present situation corresponding to $u_1=Kv_{l_1}/\|Kv_{l_1}\|$ is
$$
{K^{\wedge k}(v_{l_1}\wedge\cdots\wedge v_{l_k})\over
\|K^{\wedge k}(v_{l_1}\wedge\cdots\wedge v_{l_k})\|}
={Kv_{l_1}\wedge\cdots\wedge Kv_{l_k}\over\|Kv_{l_1}\wedge\cdots\wedge Kv_{l_k}\|}
=u_1\wedge\cdots\wedge u_k.
$$
(The last identity follows from the fact that, for linearly independent $w_1,\dots,w_k$,
$w_1\wedge\cdots\wedge w_k/\|w_1\wedge\cdots\wedge w_k\|=w_1'\wedge\cdots\wedge w_k'$ if
$w_1',\dots,w_k'$ are the Gram-Schmidt orthogonalization of $w_1,\dots,w_k$.) By
\cite[Lemma 2.4]{AuHi2} we see that the orthogonal projection $E_k(p)$ onto
$\lin\{u_1(p),\dots,u_k(p)\}$ converges to the orthogonal projection $E_k$ of
$\lin\{u_1,\dots,u_k\}$.

Finally, let $0=k_0<k_1<\dots<k_{s-1}<k_s=m$ be such that
$$
\tilde a_1=\dots=\tilde a_{k_1}>\tilde a_{k_1+1}=\dots=\tilde a_{k_2}
>\dots>\tilde a_{k_{s-1}+1}=\dots=\tilde a_{k_s}.
$$
The above argument says that, for every $r=1,\dots,s-1$, the orthogonal projection
$E_{k_r}(p)$ onto $\lin\{u_1(p),\dots,u_{k_r}(p)\}$ converges to the orthogonal projection
$E_{k_r}$ onto $\lin\{u_1,\dots,u_{k_r}\}$. When $\tilde a_{k_s}>0$, this holds for $r=s$ as
well. Therefore, when $\tilde a_{k_s}>0$, we have
\begin{align*}
Z_p&=\sum_{i=1}^n\lambda_i(p)|u_i(p)\>\<u_i(p)| \\
&=\sum_{r=1}^s\sum_{i=k_{r-1}+1}^{k_r}\lambda_i(p)|u_i(p)\>\<u_i(p)|
+\sum_{i=k_s+1}^n\lambda_i(p)|u_i(p)\>\<u_i(p)| \\
&=\sum_{r=1}^s\sum_{i=k_{r-1}+1}^{k_r}(\lambda_i(p)-\tilde a_i)|u_i(p)\>\<u_i(p)|
+\sum_{r=1}^s\tilde a_{k_r}(E_{k_r}(p)-E_{k_{r-1}}(p)) \\
&\qquad+\sum_{i=k_s+1}^n\lambda_i(p)|u_i(p)\>\<u_i(p)| \\
&\longrightarrow\sum_{r=1}^s\tilde a_{k_r}(E_{k_r}-E_{k_{r-1}})
=\sum_{i=1}^m\tilde a_i|u_i\>\<u_i|,\quad\mbox{where $E_0(p)=E_0=0$}.
\end{align*}
When $\tilde a_{k_s}=0$, we may modify the above estimate as
\begin{align*}
Z_p&=\sum_{r=1}^{s-1}\sum_{i=k_{r-1}+1}^{k_r}\lambda_i(p)|u_i(p)\>\<u_i(p)|
+\sum_{i=k_{s-1}+1}^n\lambda_i(p)|u_i(p)\>\<u_i(p)| \\
&\longrightarrow\sum_{r=1}^{s-1}\tilde a_{k_r}(E_{k_r}-E_{k_{r-1}})
=\sum_{i=1}^{k_{s-1}}\tilde a_i|u_i\>\<u_i|=\sum_{i=1}^m\tilde a_i|u_i\>\<u_i|.
\end{align*}
\end{proof}

The following corollary of Theorem \ref{T-2.1} is an improvement of \cite[Theorem 1.2]{Bo}.

\begin{cor}\label{C-2.2}
Let $A\in\bM_n$ be positive definite. We have
$\lim_{p\to\infty}\lambda_i((KA^pK^*)^{1/p})=a_i$ for all $i=1,\dots,n$ if and only if
$\{Kv_1,\dots,Kv_n\}$ is linearly independent.
\end{cor}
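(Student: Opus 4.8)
The plan is to read the result off directly from Theorem~\ref{T-2.1}, in particular from the eigenvalue limit \eqref{F-2.3}. The key point is that for a positive \emph{definite} $A$ all eigenvalues $a_1\ge\cdots\ge a_n$ are strictly positive, so a zero can occur among the limits $\lim_{p\to\infty}\lambda_i((KA^pK^*)^{1/p})$ precisely when $m<n$, where $m$ and the indices $l_1<\cdots<l_m$ are those selected by the greedy procedure described just before Theorem~\ref{T-2.1}.

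First I would record the elementary bookkeeping fact that $\{Kv_1,\dots,Kv_n\}$ is linearly independent if and only if $m=n$, in which case necessarily $l_k=k$ for every $k$. Indeed, if the $Kv_i$ are linearly independent then none of them vanishes and none lies in the span of the preceding ones, so the greedy selection skips no index, giving $m=n$ and $l_k=k$; conversely, if $m=n$ then $\{Kv_{l_1},\dots,Kv_{l_n}\}$ is by construction a linearly independent list consisting of all $n$ vectors $Kv_1,\dots,Kv_n$.

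For the ``if'' direction, assuming $\{Kv_1,\dots,Kv_n\}$ linearly independent gives $l_k=k$ for all $k$, so \eqref{F-2.3} yields $\lim_{p\to\infty}\lambda_k((KA^pK^*)^{1/p})=a_{l_k}=a_k$ for $1\le k\le n$. For the ``only if'' direction I would argue by contraposition: if the list is linearly dependent then $m<n$, so \eqref{F-2.3} gives $\lim_{p\to\infty}\lambda_n((KA^pK^*)^{1/p})=0$, whereas $a_n>0$ because $A$ is positive definite; hence the limits cannot all coincide with the $a_i$.

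I do not anticipate any genuine obstacle here, since all of the analytic content already lies in Theorem~\ref{T-2.1}. The only step needing a moment's care is the combinatorial observation that full linear independence of $Kv_1,\dots,Kv_n$ is equivalent to $m=n$ (equivalently $l_k=k$ for all $k$), which follows at once from the definition of the $l_k$, together with the trivial remark that positive definiteness of $A$ excludes zero limits.
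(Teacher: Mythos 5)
Your argument is correct and is exactly the intended derivation: the paper states the corollary as an immediate consequence of Theorem~\ref{T-2.1} with no written proof, and your reduction via \eqref{F-2.3} together with the observation that linear independence of $\{Kv_1,\dots,Kv_n\}$ is equivalent to $m=n$ (hence $l_k=k$), plus $a_n>0$ from positive definiteness, supplies precisely the missing details.
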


\begin{remark}\label{R-2.3}\rm
Note that Theorem \ref{T-2.1} can easily extend to the case where $K$ is a rectangle
$n'\times n$ matrix. In fact, when $n'<n$ we may apply Theorem \ref{T-2.1} to $n\times n$
matrices $\begin{bmatrix}K\\O\end{bmatrix}$ and $A$, and when $n'>n$ we may apply to
$n'\times n'$ matrices $\begin{bmatrix}K&O\end{bmatrix}$ and $A\oplus O_{n'-n}$.
\end{remark}

A linear map $\Phi:\bM_n\to\bM_{n'}$ is said to be positive if $\Phi(A)\in\bM_{n'}^+$ for
all $A\in\bM_n^+$, which is further said to be strictly positive if $\Phi(I_n)>0$, that is,
$\Phi(A)>0$ for all $A\in\bM_n$, $A>0$. The following is an extended and refined version of
Theorem \ref{T-2.1}.

\begin{thm}\label{T-2.4}
Let $\Phi:\bM_n\to\bM_{n'}$ be a positive linear map. Let $A\in\bM_n^+$ be given as
$A=\sum_{i=1}^na_i|v_i\>\<v_i|$ with $a_1\ge\dots\ge a_n$ and an orthonormal basis
$\{v_1,\dots,v_n\}$ of $\bC^n$. Then $\lim_{p\to\infty}\Phi(A^p)^{1/p}$ exists and
$$
\lim_{p\to\infty}\Phi(A^p)^{1/p}=\sum_{i=1}^na_iP_{\cM_i},
$$
where
\begin{align*}
\cM_1&:=\ran\Phi(|v_1\>\<v_1|), \\
\cM_i&:=\bigvee_{j=1}^i\ran\Phi(|v_j\>\<v_j|)\ominus
\bigvee_{j=1}^{i-1}\ran\Phi(|v_j\>\<v_j|),\qquad2\le i\le n,
\end{align*}
and $P_{\cM_i}$ is the orthogonal projection onto $\cM_i$ for $1\le i\le n$.
\end{thm}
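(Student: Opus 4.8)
The plan is to reduce the whole statement to Theorem~\ref{T-2.1}. Put $F_i:=\Phi(|v_i\>\<v_i|)\in\bM_{n'}^+$ for $1\le i\le n$, so that by linearity $\Phi(A^p)=\sum_{i=1}^na_i^pF_i$ for all $p>0$. If every $F_i=0$ the assertion is trivial, so assume otherwise. For each $i$ let $r_i:=\mathrm{rank}\,F_i$ and choose $w_{ij}\in\bC^{n'}$ ($1\le j\le r_i$) with $F_i=\sum_{j=1}^{r_i}|w_{ij}\>\<w_{ij}|$, say from the spectral decomposition of $F_i$, so that $\lin\{w_{ij}:1\le j\le r_i\}=\ran F_i$. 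With $N:=\sum_{i=1}^nr_i>0$, fix an orthonormal basis $\{e_{ij}\}$ of $\bC^N$ indexed by the admissible pairs $(i,j)$, and define $\widetilde A:=\sum_{i,j}a_i|e_{ij}\>\<e_{ij}|\in\bM_N^+$ and the $n'\times N$ matrix $K$ by $Ke_{ij}:=w_{ij}$. Then
$$K\widetilde A^pK^*=\sum_{i,j}a_i^p\,|w_{ij}\>\<w_{ij}|=\sum_{i=1}^na_i^pF_i=\Phi(A^p),\qquad p>0,$$
so Theorem~\ref{T-2.1} in the rectangular form of Remark~\ref{R-2.3} at once gives the existence of $\lim_{p\to\infty}\Phi(A^p)^{1/p}$.

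It remains to evaluate this limit, for which I would run the selection procedure preceding Theorem~\ref{T-2.1} for the pair $(\widetilde A,K)$ and keep track of the subspaces involved. List the distinct values among $a_1,\dots,a_n$ as $c_1>\dots>c_t\ (\ge 0)$, and for each $s$ put $\mathcal{L}_{\ge s}:=\sum_{i:\,a_i\ge c_s}\ran F_i$ and $\mathcal{L}_{>s}:=\sum_{i:\,a_i> c_s}\ran F_i$. In the procedure the eigenvectors of $\widetilde A$ are scanned in non-increasing order of eigenvalue; the vectors scanned at level $c_s$ are $\{Ke_{ij}=w_{ij}:a_i=c_s\}$, which together span $\sum_{i:\,a_i=c_s}\ran F_i$. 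Since a maximal linearly independent subfamily of the scanned vectors spans their whole span, and since $\ran(X+Y)=\ran X+\ran Y$ for $X,Y\in\bM_{n'}^+$, the vectors selected through level $c_s$ span $\mathcal{L}_{\ge s}$ while those selected above level $c_s$ span $\mathcal{L}_{>s}$. Hence the Gram--Schmidt orthonormalisation of the vectors newly selected at level $c_s$ is an orthonormal basis of $\mathcal{K}_s:=\mathcal{L}_{\ge s}\ominus\mathcal{L}_{>s}$, and \eqref{F-2.2} reads
$$\lim_{p\to\infty}\Phi(A^p)^{1/p}=\sum_{s=1}^tc_s\,P_{\mathcal{K}_s}.$$

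Finally I would match $\sum_sc_sP_{\mathcal{K}_s}$ with $\sum_{i=1}^na_iP_{\cM_i}$ by telescoping. Set $\mathcal{N}_i:=\bigvee_{j=1}^i\ran\Phi(|v_j\>\<v_j|)=\sum_{j=1}^i\ran F_j$ with $\mathcal{N}_0:=\{0\}$; then $\mathcal{N}_{i-1}\subseteq\mathcal{N}_i$, so $\cM_i=\mathcal{N}_i\ominus\mathcal{N}_{i-1}$ and $P_{\cM_i}=P_{\mathcal{N}_i}-P_{\mathcal{N}_{i-1}}$ (including $i=1$, where this is $P_{\mathcal{N}_1}$). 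Therefore $\sum_{i=1}^na_iP_{\cM_i}=\sum_{i=1}^na_i(P_{\mathcal{N}_i}-P_{\mathcal{N}_{i-1}})$, and since the indices split into maximal blocks $\{\alpha_s,\dots,\beta_s\}$ on which $a_i\equiv c_s$, each block sum telescopes to $c_s(P_{\mathcal{N}_{\beta_s}}-P_{\mathcal{N}_{\alpha_s-1}})=c_sP_{\mathcal{K}_s}$, using $\mathcal{N}_{\beta_s}=\mathcal{L}_{\ge s}$ and $\mathcal{N}_{\alpha_s-1}=\mathcal{L}_{>s}$. Summing over $s$ completes the argument. The delicate step is the middle paragraph, namely translating the purely combinatorial selection rule of Theorem~\ref{T-2.1} into statements about the subspaces $\cM_i$; the key elementary input is $\ran(X+Y)=\ran X+\ran Y$ for positive semidefinite $X,Y$ (equivalently $\ker(X+Y)=\ker X\cap\ker Y$), which moreover makes both sides independent of the choices of the $w_{ij}$ and of the $v_i$ for degenerate eigenvalues, just as noted before Theorem~\ref{T-2.1}.
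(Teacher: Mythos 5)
Your argument is correct, and the second half (identifying the Gram--Schmidt spans with the successive joins of the ranges $\ran\Phi(|v_i\>\<v_i|)$ via $\ran(X+Y)=\ran X+\ran Y$ for $X,Y\ge0$, then telescoping) is essentially the same bookkeeping as the paper's, just organized by eigenvalue levels rather than by the selected indices $l_k$. Where you genuinely diverge is in the reduction to Theorem \ref{T-2.1}. The paper first replaces $\Phi$ by its composition with the trace-preserving conditional expectation onto $C^*(I,A)$ (legitimate because only the values $\Phi(A^p)$ matter), which makes the map completely positive, and then invokes the Stinespring representation to write $\Phi(X)=K(I_\nu\otimes X)K^*$; Theorem \ref{T-2.1} is then applied to the pair $(I_\nu\otimes A,\,K)$ with eigenvectors $e_j\otimes v_i$, and the identity $\sum_j|K(e_j\otimes v_i)\>\<K(e_j\otimes v_i)|=\Phi(|v_i\>\<v_i|)$ plays exactly the role of your $\sum_j|w_{ij}\>\<w_{ij}|=F_i$. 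You instead observe that $\Phi(A^p)=\sum_i a_i^pF_i$ depends only on the finitely many positive matrices $F_i=\Phi(|v_i\>\<v_i|)$, and build the congruence $\Phi(A^p)=K\widetilde A^pK^*$ by hand from rank-one decompositions of the $F_i$. This buys a more elementary and self-contained proof: no complete positivity, no conditional expectation, no dilation theorem, only linearity and positivity of $\Phi$ on the spectral projections of $A$. What the paper's route buys is a uniform representation $\Phi(X)=K(I_\nu\otimes X)K^*$ valid for all $X$, but since the theorem only concerns functions of the single matrix $A$, that generality is not needed, and your construction is arguably the cleaner one. Two small points you should make explicit if you write this up: order the index pairs $(i,j)$ so that the eigenvalues of $\widetilde A$ are non-increasing, as Theorem \ref{T-2.1} requires; and note that the final formula $\sum_s c_sP_{\mathcal{K}_s}$ depends only on the subspaces $\ran F_i$, so the choice of the vectors $w_{ij}$ is immaterial, as you already indicated.
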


\begin{proof}
Let $C^*(I,A)$ be the commutative $C^*$-subalgebra of $\bM_n$ generated by $I,A$.
We can consider the composition of the conditional expectation from $\bM_n$ onto $C^*(I,A)$
with respect to $\Tr$ and $\Phi|_{C^*(I,A)}:C^*(I,A)\to\bM_{n'}$ instead of $\Phi$, so we
may assume that $\Phi$ is completely positive. By the Stinespring representation there are
a $\nu\in\bN$, a $*$-homomorphism $\pi:\bM_n\to\bM_{n\nu}$ and a linear map
$K:\bC^{n\nu}\to\bC^{n'}$ such that $\Phi(X)=K\pi(X)K^*$ for all $X\in\bM_n$. Moreover,
since $\pi:\bM_n\to\bM_{n\nu}$ is represented, under a suitable change of an orthonormal
basis of $\bC^{n\nu}$, as $\Phi(X)=I_\nu\otimes X$ for all $X\in\bM_n$ under
identification $\bM_{n\nu}=\bM_\nu\otimes\bM_n$, we can assume that $\Phi$ is given (with
a change of $K$) as
$$
\Phi(X)=K(I_\nu\otimes X)K^*,\qquad X\in\bM_n.
$$
We then write
\begin{align*}
I_\nu\otimes A&=(I_\nu\otimes V)\diag\bigl(\underbrace{a_1,\dots,a_1}_\nu,
\underbrace{a_2,\dots,a_2}_\nu,\dots,\underbrace{a_n,\dots,a_n}_\nu\bigr)
(I_\nu\otimes V)^* \\
&=\sum_{i=1}^na_i(|e_1\otimes v_i\>\<e_1\otimes v_i|+\dots
+|e_\nu\otimes v_i\>\<e_\nu\otimes v_i|).
\end{align*}
Now, we consider the following sequence of $n\nu$ vectors in $\bC^{n'}$:
$$
K(e_1\otimes v_1),\dots,K(e_\nu\otimes v_1),K(e_1\otimes v_2),\dots,K(e_\nu\otimes v_2),
\dots,K(e_1\otimes v_n),\dots,K(e_\nu\otimes v_n),
$$
and if $K(e_j\otimes v_i)$ is a linear combination of the vectors in the sequence
preceding it, then we remove it from the sequence. We write the resulting linearly
independent subsequence as
$$
K(e_j\otimes v_{l_1})\ \,(j\in J_1),\ \ K(e_j\otimes v_{l_2})\ \,(j\in J_2),
\ \ \dots,\ \ K(e_j\otimes v_{l_m})\ \,(j\in J_m),
$$
where $1\le l_1<l_2<\dots<l_m\le n$ and $J_1,\dots,J_m\subset\{1,\dots,\nu\}$.
Furthermore, by performing the Gram-Schmidt orthogonalization to this subsequence, we
end up making an orthonormal sequence of vectors in $\bC^{n'}$ as follows:
$$
u_j^{(l_1)}\ \,(j\in J_1),\ \ u_j^{(l_2)}\ \,(j\in J_2),
\ \ \dots,\ \ u_j^{(l_m)}\ \,(j\in J_m).
$$
Since
$$
\Phi(A^p)^{1/p}=K((I_\nu\otimes A)^p)^{1/p}K^*,
$$
Theorem \ref{T-2.1} and Remark \ref{R-2.3} imply that $\lim_{p\to\infty}\Phi(A^p)^{1/p}$
exists and
$$
\lim_{p\to\infty}\Phi(A^p)^{1/p}
=\sum_{k=1}^ma_{l_k}\Biggl(\sum_{j\in J_k}
\big|u_j^{(l_k)}\bigr\>\bigl\<u_j^{(l_k)}\big|\Biggr),
$$
where $\sum_{j\in J_k}\big|u_j^{(l_k)}\bigr\>\bigl\<u_j^{(l_k)}\big|=0$ if $J_k=\emptyset$.

The next step of the proof is to find what is
$\sum_{j\in J_k}\big|u_j^{(l_k)}\bigr\>\bigl\<u_j^{(l_k)}\big|$ for $1\le k\le m$. For
this we first note that
\begin{align*}
\sum_{j=1}^\nu|K(e_j\otimes v_i)\>\<K(e_j\otimes v_i)|
&=K\Biggl(\sum_{j=1}^\nu|e_j\otimes v_i\>\<e_j\otimes v_i|\Biggr)K^* \\
&=K(I_\nu\otimes|v_i\>\<v_i|)K^*=\Phi(|v_i\>\<v_i|).
\end{align*}
From Lemma \ref{L-2.5} below this implies that
$$
\cR_i:=\ran\Phi(|v_i\>\<v_i|)=\lin\{K(e_j\otimes v_i):1\le j\le\nu\}.
$$
Through the procedure of the Gram-Schmidt diagonalization we see that
\begin{align*}
&\cR_i=0,\qquad1\le i<l_1, \\
&\cR_{l_1}=\lin\bigl\{u_j^{(l_1)}:j\in J_1\bigr\}, \\
&\cR_i\subset\cR_{l_1},\qquad l_1<i<l_2, \\
&(\cR_{l_1}\vee\cR_{l_2})\ominus\cR_{l_1}=\lin\bigl\{u_j^{(l_2)}:j\in J_2\bigr\}, \\
&\cR_i\subset\cR_{l_1}\vee\cR_{l_2},\qquad l_2<i<l_3, \\
&(\cR_{l_1}\vee\cR_{l_2}\vee\cR_{l_3})\ominus(\cR_{l_1}\vee\cR_{l_2})
=\lin\bigl\{u_j^{(l_3)}:j\in J_3\bigr\}, \\
&\qquad\vdots \\
&\cR_i\subset\cR_{l_1}\vee\dots\vee\cR_{l_{m-1}},\qquad l_{m-1}<i<l_m, \\
&(\cR_{l_1}\vee\dots\vee\cR_{l_m})\ominus(\cR_{l_1}\vee\dots\vee\cR_{l_{m-1}})
=\lin\bigl\{u_j^{(l_m)}:j\in J_m\bigr\}, \\
&\cR_i\subset\cR_{l_1}\vee\dots\vee\cR_{l_m},\qquad l_m<i\le n.
\end{align*}
Now, let $P_{\cM_i}$ be the orthogonal projections, respectively, onto the subspaces
$$
\cM_1:=\cR_1,\quad
\cM_i:=(\cR_1\vee\dots\vee\cR_i)\ominus(\cR_1\vee\dots\vee\cR_{i-1}),\quad2\le i\le n,
$$
so that $P_{\cM_i}=0$ if $i\not\in\{l_1,\dots,l_m\}$ and $P_{\cM_{l_k}}$ is the orthogonal
projection onto $\lin\bigl\{u_j^{(l_k)}:j\in J_k\bigr\}$ for $1\le k\le m$. Therefore,
we have
$$
\lim_{p\to\infty}\Phi(A^p)^{1/p}
=\sum_{k=1}^ma_{l_k}
\Biggl(\sum_{j\in J_k}\big|u_j^{(l_k)}\bigr\>\bigl\<u_j^{(l_k)}\big|\Biggr)
=\sum_{k=1}^ma_{l_k}P_{\cM_{l_k}}=\sum_{i=1}^na_iP_{\cM_i}.
$$
\end{proof}

\begin{lemma}\label{L-2.5}
For any finite set $\{w_1,\dots,w_k\}$ in $\bC^{n'}$, $\lin\{w_1,\dots,w_k\}$ is equal to
the range of $|w_1\>\<w_1|+\dots+|w_k\>\<w_k|$. More generally, for every
$B_1,\dots,B_k\in\bM_{n'}^+$, $\bigvee_{j=1}^k\ran B_j$ is equal to the range of
$B_1+\dots+B_k$.
\end{lemma}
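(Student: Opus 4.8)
The plan is to reduce the general assertion to the vector case, which is the genuinely geometric statement, and then handle the sum of positive semidefinite matrices by dominating each summand. For the first part, let $w_1,\dots,w_k\in\bC^{n'}$ and put $B:=\sum_{j=1}^k|w_j\>\<w_j|$. The inclusion $\ran B\subset\lin\{w_1,\dots,w_k\}$ is immediate, since $B\xi=\sum_j\<w_j,\xi\>w_j$ is visibly a linear combination of the $w_j$. For the reverse inclusion, I would argue by orthogonal complements: since $B\ge0$, we have $\ran B=(\ker B)^\perp$, so it suffices to show $(\lin\{w_1,\dots,w_k\})^\perp\subset\ker B$. If $\<w_j,\xi\>=0$ for all $j$, then $B\xi=\sum_j\<w_j,\xi\>w_j=0$, which is exactly what is needed. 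Taking perpendiculars again gives $\lin\{w_1,\dots,w_k\}=(\ker B)^\perp=\ran B$.

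For the general statement, write $B:=B_1+\dots+B_k$ with each $B_j\in\bM_{n'}^+$. The inclusion $\ran B\subset\bigvee_{j=1}^k\ran B_j$ is clear since $B\xi=\sum_j B_j\xi$. For the converse, I would again pass to kernels: because all $B_j$ and $B$ are positive semidefinite, $0\le B_j\le B$ for each $j$, and hence $\ker B\subset\ker B_j$ for every $j$ (if $B\xi=0$ then $\<\xi,B_j\xi\>\le\<\xi,B\xi\>=0$, so $B_j\xi=0$ by positivity). Therefore $\ker B\subset\bigcap_{j=1}^k\ker B_j$; taking orthogonal complements and using $\ran B=(\ker B)^\perp$, $\ran B_j=(\ker B_j)^\perp$ yields $\ran B\supset\bigvee_{j=1}^k\ran B_j$. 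Combined with the easy inclusion this gives equality. (Alternatively, one could deduce the general case from the rank-one case by writing each $B_j=\sum_i|w_i^{(j)}\>\<w_i^{(j)}|$ via a spectral decomposition and applying the first part to the full collection of vectors, but the kernel argument is shorter and self-contained.)

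There is really no main obstacle here; the only point to be a little careful about is the direction of the inclusions and the use of the identity $\ran T=(\ker T)^\perp$, which is valid precisely because each operator in sight is self-adjoint (indeed positive). I would state that identity explicitly at the start of the proof so that both halves read off cleanly.
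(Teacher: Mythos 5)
Your general (second) part is correct and complete: the easy inclusion together with $0\le B_j\le B\Rightarrow\ker B\subset\ker B_j$, followed by taking orthogonal complements, is exactly the right argument, and it is the kernel-side version of what the paper does (the paper works on the range side, using $|w_i\>\<w_i|\le Q\Rightarrow \ran|w_i\>\<w_i|\subset\ran Q$ for $Q=\sum_j|w_j\>\<w_j|$). Since the first assertion is the special case $B_j=|w_j\>\<w_j|$, your proof of the lemma as a whole does go through via the second part.

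However, the ``reverse inclusion'' paragraph of your first part has the implication pointing the wrong way. You prove $(\lin\{w_1,\dots,w_k\})^\perp\subset\ker B$; taking orthogonal complements in \emph{that} inclusion yields $\ran B=(\ker B)^\perp\subset\lin\{w_1,\dots,w_k\}$, which is the easy inclusion you already had, not the one you need. To conclude $\lin\{w_1,\dots,w_k\}\subset\ran B$ you must establish the opposite containment $\ker B\subset(\lin\{w_1,\dots,w_k\})^\perp$, and this is precisely where positivity enters: if $B\xi=0$ then $0=\<\xi,B\xi\>=\sum_j|\<w_j,\xi\>|^2$, hence $\<w_j,\xi\>=0$ for every $j$. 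This is the same computation you carry out (correctly) in the second part with $\<\xi,B_j\xi\>\le\<\xi,B\xi\>=0$, so the fix is immediate; but as written, the first part only establishes one inclusion twice.
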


\begin{proof}
Let $Q:=|w_1\>\<w_1|+\dots+|w_k\>\<w_k|$. Since
$$
Qx=\<w_1,x\>w_1+\dots+\<w_k,x\>w_k\in\lin\{w_1,\dots,w_k\}
$$
for all $x\in\bC^{n'}$, we have $\ran Q\subset\lin\{w_1,\dots,w_k\}$. Since
$|w_i\>\<w_i|\le Q$, we have
$$
w_i\in\ran|w_i\>\<w_i|\subset\ran Q,\quad1\le i\le k.
$$
Hence we have $\lin\{w_1,\dots,w_k\}\subset\ran Q$. The proof of the latter assertion is
similar.
\end{proof}

Thanks to the lemma we can restate Theorem \ref{T-2.4} as follows:

\begin{thm}\label{T-2.6}
Let $\Phi:\bM_n\to\bM_{n'}$ be a positive linear map. Let $A\in\bM_n^+$ be given with the
spectral decomposition $A=\sum_{k=1}^ma_kP_k$, where $a_1>a_2>\dots>a_m>0$. Define
\begin{align*}
\cM_1&:=\ran\Phi(P_1), \\
\cM_k&:=\ran\Phi(P_1+\dots+P_k)\ominus\ran\Phi(P_1+\dots+P_{k-1}),\quad2\le k\le m.
\end{align*}
Then
$$
\lim_{p\to\infty}\Phi(A^p)^{1/p}=\sum_{k=1}^ma_kP_{\cM_k}.
$$
\end{thm}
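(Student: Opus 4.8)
The plan is to obtain Theorem~\ref{T-2.6} directly from Theorem~\ref{T-2.4} and Lemma~\ref{L-2.5}; the whole content is a reindexing that groups together eigenvectors belonging to the same eigenvalue. First I would fix an orthonormal basis $\{v_1,\dots,v_n\}$ of $\bC^n$ consisting of eigenvectors of $A$ that refines the spectral decomposition: pick integers $0=n_0<n_1<\dots<n_m\le n$ such that $\{v_j:n_{k-1}<j\le n_k\}$ is an orthonormal basis of $\ran P_k$ for $1\le k\le m$ and $\{v_j:n_m<j\le n\}$ is an orthonormal basis of $\ker A$ (an empty set when $A$ is invertible). Then $A=\sum_{i=1}^n a_i|v_i\>\<v_i|$ with $a_i=a_k$ for $n_{k-1}<i\le n_k$ and $a_i=0$ for $i>n_m$, so that $a_1\ge\dots\ge a_n$ as in Theorem~\ref{T-2.4}.

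Applying Theorem~\ref{T-2.4} to this presentation of $A$ gives
$$
\lim_{p\to\infty}\Phi(A^p)^{1/p}=\sum_{i=1}^n a_i P_{\mathcal{N}_i},
$$
where $\mathcal{N}_1:=\ran\Phi(|v_1\>\<v_1|)$ and $\mathcal{N}_i:=\bigvee_{j=1}^i\ran\Phi(|v_j\>\<v_j|)\ominus\bigvee_{j=1}^{i-1}\ran\Phi(|v_j\>\<v_j|)$ for $2\le i\le n$. The terms with $i>n_m$ drop out since $a_i=0$ there, and for the remaining terms I would collect the indices lying in a single block $n_{k-1}<i\le n_k$. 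Since the subspaces $\mathcal{N}_1,\dots,\mathcal{N}_n$ are mutually orthogonal with $\mathcal{N}_1\vee\dots\vee\mathcal{N}_i=\bigvee_{j=1}^i\ran\Phi(|v_j\>\<v_j|)$ for each $i$, the partial sum $\sum_{i=n_{k-1}+1}^{n_k}P_{\mathcal{N}_i}$ is the orthogonal projection onto $\bigl(\bigvee_{j=1}^{n_k}\ran\Phi(|v_j\>\<v_j|)\bigr)\ominus\bigl(\bigvee_{j=1}^{n_{k-1}}\ran\Phi(|v_j\>\<v_j|)\bigr)$, and since $a_i=a_k$ throughout the block we get $\sum_{i=n_{k-1}+1}^{n_k}a_iP_{\mathcal{N}_i}=a_k$ times that projection.

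Finally I would identify the partial joins. Each $\Phi(|v_j\>\<v_j|)$ lies in $\bM_{n'}^+$, and by linearity of $\Phi$ together with $\sum_{j=1}^{n_k}|v_j\>\<v_j|=P_1+\dots+P_k$, Lemma~\ref{L-2.5} yields
$$
\bigvee_{j=1}^{n_k}\ran\Phi(|v_j\>\<v_j|)=\ran\sum_{j=1}^{n_k}\Phi(|v_j\>\<v_j|)=\ran\Phi(P_1+\dots+P_k),
$$
which is independent of the chosen basis within the eigenspaces. Hence $\sum_{i=n_{k-1}+1}^{n_k}P_{\mathcal{N}_i}=P_{\cM_k}$ with $\cM_k$ as in the statement (under the convention $P_1+\dots+P_0=0$, so $\cM_1=\ran\Phi(P_1)$), and summing over $k=1,\dots,m$ gives $\lim_{p\to\infty}\Phi(A^p)^{1/p}=\sum_{k=1}^m a_kP_{\cM_k}$. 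The only step requiring attention is the bookkeeping of the telescoping sum inside each block of equal eigenvalues and the use of Lemma~\ref{L-2.5} to render the partial joins basis-independent; no analytic difficulty arises beyond Theorem~\ref{T-2.4} itself.
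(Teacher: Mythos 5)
Your proposal is correct and follows exactly the route the paper intends: Theorem \ref{T-2.6} is presented there as a restatement of Theorem \ref{T-2.4} obtained by grouping the eigenvectors within each eigenspace and invoking Lemma \ref{L-2.5} to identify $\bigvee_{j}\ran\Phi(|v_j\>\<v_j|)$ with $\ran\Phi(P_1+\dots+P_k)$. Your write-up merely makes explicit the telescoping of the mutually orthogonal pieces $\mathcal{N}_i$ inside each block, which the paper leaves implicit.
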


\begin{example}\label{E-2.7}\rm
Consider a linear map $\Phi:\bM_{2n}\to\bM_n$ given by
$$
\Phi\left(\begin{bmatrix}X_{11}&X_{12}\\X_{21}&X_{22}\end{bmatrix}\right)
:={X_{11}+X_{22}\over2},\qquad X_{ij}\in\bM_n.
$$
Clearly, $\Phi$ is completely positive. For any $A,B\in\bM_n^+$ and $p>0$ we have
$$
\Phi\left(\begin{bmatrix}A&0\\0&B\end{bmatrix}^p\right)^{1/p}
=\biggl({A^p+B^p\over2}\biggr)^{1/p}.
$$
Thus, it is well-known \cite{Ka} that
\begin{align}\label{F-2.7}
\lim_{p\to\infty}\Phi\left(\begin{bmatrix}A&0\\0&B\end{bmatrix}^p\right)^{1/p}
=\lim_{p\to\infty}\biggl({A^p+B^p\over2}\biggr)^{1/p}=A\vee B,
\end{align}
where $A\vee B$ is the supremum of $A,B$ in the spectral order. Here let us show
\eqref{F-2.7} from Theorem \ref{T-2.6}. The spectral decompositions of $A,B$ are given as
$$
A=\sum_{i=1}^ma_iP_i,\qquad B=\sum_{j=1}^{m'}b_jQ_j,
$$
where $a_1>\dots>a_m\ge0$, $b_1>\dots>b_{m'}\ge0$ and
$\sum_{i=1}^mP_i=\sum_{j=1}^{m'}Q_j=I$. Then
$$
A\oplus B=\sum_{k=1}^lc_kR_k,
$$
where $\{c_k\}_{k=1}^l=\{a_i\}_{i=1}^m\cup\{b_j\}_{j=1}^{m'}$ with $c_1>\dots>c_l$ and
$$
R_k=\begin{cases}P_i\oplus Q_j & \text{if $a_i=b_j=c_k$}, \\
P_i\oplus0 & \text{if $a_i=c_k$ and $b_j\ne c_k$ for all $j$}, \\
0\oplus Q_j & \text{if $b_j=c_k$ and $a_i\ne c_k$ for all $i$}.
\end{cases}
$$
Note that
$$
\Phi(R_1+\dots+R_k)
={1\over2}\Biggl(\sum_{i:a_i\ge c_k}P_i+\sum_{j:b_j\ge c_k}Q_j\Biggr)
$$
so that by Lemma \ref{L-2.5} the support projection $F_k$ (i.e., the orthogonal projection
onto the range) of $\Phi(R_1+\dots+R_k)$ is
$$
F_k=\Biggl(\sum_{i:a_i\ge c_k}P_i\Biggr)\vee\Biggl(\sum_{j:b_j\ge c_k}Q_j\Biggr).
$$
Theorem \ref{T-2.6} implies that
$$
\lim_{p\to\infty}\Phi\left(\begin{bmatrix}A&0\\0&B\end{bmatrix}^p\right)^{1/p}
=C:=\sum_{k=1}^lc_k(F_k-F_{k-1}).
$$
For every $x\in\bR$ we denote by $E_{[x,\infty)}(A)$ the spectral projection of $A$
corresponding to the interval $[x,\infty)$, i.e.,
$$
E_{[x,\infty)}(A):=\sum_{i:a_i\ge x}P_i,
$$
and similarly for $E_{[x,\infty)}(B)$ and $E_{[x,\infty)}(C)$. If $c_k\ge x>c_{k+1}$
for some $1\le k<l$, then we have
$$
E_{[x,\infty)}(C)=F_k=E_{[x,\infty)}(A)\vee E_{[x,\infty)}(B).
$$
This holds also when $x>c_1$ and $x\le c_l$. Indeed, when $x>c_1$,
$E_{[x,\infty)}(C)=0=E_{[x,\infty)}(A)\vee E_{[x,\infty)}(B)$. When $x\le c_l$,
$E_{[x,\infty)}(C)=I=E_{[x,\infty)}(A)\vee E_{[x,\infty)}(B)$. This description of $C$ is the
same as $A\vee B$ in \cite{Ka}, so we have $C=A\vee B$.
\end{example}

\begin{example}\label{E-2.8}\rm
The example here is relevant to quantum information. For density matrices $\rho,\sigma\in\bM_n$
(i.e., $\rho,\sigma\in\bM_n^+$ with $\Tr\rho=\Tr\sigma=1$) and for a parameter
$\alpha\in(0,\infty)\setminus\{1\}$, the traditional \emph{R\'enyi relative entropy} is
$$
D_\alpha(\rho\|\sigma):=\begin{cases}
{1\over\alpha-1}\log\bigl[\Tr\rho^\alpha\sigma^{1-\alpha}\bigr]
& \text{if $\rho^0\le\sigma^0$ or $0<\alpha<1$}, \\
+\infty & \text{otherwise},
\end{cases}
$$
where $\rho^0$ denotes the support projection of $\rho$. On the other hand, the new concept
recently introduced and called the \emph{sandwiched R\'enyi relative entropy}
\cite{MDSFT,WWY} is
$$
\widetilde D_\alpha(\rho\|\sigma):=\begin{cases}
{1\over\alpha-1}\log\bigl[\Tr
\bigl(\sigma^{1-\alpha\over2\alpha}\rho\sigma^{1-\alpha\over2\alpha}\bigr)^\alpha\bigr]
& \text{if $\rho^0\le\sigma^0$ or $0<\alpha<1$}, \\
+\infty & \text{otherwise}.
\end{cases}
$$
By taking the limit we also consider
\begin{align*}
D_0(\rho\|\sigma)&:=\lim_{\alpha\searrow0}D_\alpha(\rho\|\sigma)=-\log\Tr(\rho^0\sigma), \\
\widetilde D_0(\rho\|\sigma)&:=\lim_{\alpha\searrow0}\widetilde D_\alpha(\rho\|\sigma)
=-\log\biggl[\lim_{\alpha\searrow0}\Tr
\bigl(\sigma^{1-\alpha\over2\alpha}\rho\sigma^{1-\alpha\over2\alpha}\bigr)^\alpha\biggr].
\end{align*}
(We remark that the notations $D_\alpha$ and $\widetilde D_\alpha$ are interchanged from
those in \cite{DL}.) Here, note that
$$
\lim_{\alpha\searrow0}\Tr
\bigl(\sigma^{1-\alpha\over2\alpha}\rho\sigma^{1-\alpha\over2\alpha}\bigr)^\alpha
=\lim_{p\to\infty}\Tr(\sigma^{p/2}\rho\sigma^{p/2})^{1/p}
=\lim_{p\to\infty}\Tr(\rho^0\sigma^p\rho^0)^{1/p},
$$
where the existence of $\lim_{p\to\infty}\Tr(\rho^0\sigma^p\rho^0)^{1/p}$ follows from the
\emph{Araki-Lieb-Thirring inequality} \cite{Ar} (also \cite{AH}), and the latter equality
above follows since $\lambda\rho^0\le\rho\le\mu\rho^0$ for some $\lambda,\mu>0$ and
$$
\lambda^{1/p}\Tr(\sigma^{p/2}\rho^0\sigma^{p/2})^{1/p}
\le\Tr(\sigma^{p/2}\rho\sigma^{p/2})^{1/p}
\le\mu^{1/p}\Tr(\sigma^{p/2}\rho^0\sigma^{p/2})^{1/p}.
$$
It was proved in \cite{DL} that
\begin{align}\label{F-2.8}
\widetilde D_0(\rho\|\sigma)\le D_0(\rho\|\sigma)
\end{align}
and equality holds in \eqref{F-2.8} if $\rho^0=\sigma^0$. Let us here prove the following:
\begin{itemize}
\item[(1)] $\widetilde D_0(\rho\|\sigma)=-\log\widetilde Q_0(\rho\|\sigma)$, where
\begin{align*}
\widetilde Q_0(\rho\|\sigma):=\max\bigl\{\Tr(P\sigma)&:
P\ \mbox{an orthogonal projection}, \\
&\qquad\qquad[P,\sigma]=0,\,(P\rho^0P)^0=P\bigr\}.
\end{align*}
\item[(2)] $\widetilde D_0(\rho\|\sigma)=D_0(\rho\|\sigma)$ holds if and only if
$[\rho^0,\sigma]=0$. (Obviously, $[\rho^0,\sigma]=0$ if $\rho^0=\sigma^0$.)
\end{itemize}

Indeed, to prove (1), first note that $(P\rho^0P)^0=P$ means that the dimension of
$\ran\rho^0P$ is equal to that of $P$, that is, $\rho^0v_1,\dots,\rho^0v_d$ are linearly
independent when $\{v_1,\dots,v_d\}$ is an orthonormal basis of $\ran P$. Choose
$1\le l_1<l_2<\dots<l_m$ as in the first paragraph of this section (before Theorem
\ref{T-2.1}) for $A=\sigma$ and $K=\rho^0$. Let $P_0$ be the orthogonal projection onto
$\lin\{v_{l_1},\dots,v_{l_m}\}$. Then $[P_0,\sigma]=0$, $(P_0\rho^0P_0)^0=P_0$, and Theorem
\ref{T-2.1} gives
$$
\lim_{p\to\infty}\Tr(\rho^0\sigma^p\rho^0)^{1/p}
=\sum_{k=1}^ma_{l_k}=\Tr(P_0\sigma).
$$
On the other hand, let $P$ be an orthogonal projection with $[P,\sigma]=0$ and
$(P\rho^0P)^0=P$. From $[P,\sigma]=0$ we may assume that $P=\sum_{k=1}^d|v_{i_k}\>\<v_{i_k}|$
for some $1\le i_1<\dots<i_d\le n$ (after, if necessary, changing $v_i$ for degenerate
eigenvalues $a_i$). Since $(P\rho^0P)^0=P$ implies that $\rho^0v_{i_1},\dots,\rho^0v_{i_d}$
are linearly independent, we have $d\le m$ and
$$
\Tr(P\sigma)=\sum_{k=1}^da_{i_k}\le\sum_{k=1}^ma_{l_k}=\Tr(P_0\sigma).
$$
Next, to prove (2), note that $\Tr(\rho^0\sigma^p\rho^0)^{1/p}$ is increasing in $p>0$ by
the Araki-Lieb-Thirring inequality mentioned above, which shows that
$$
\Tr(\rho^0\sigma)\le\lim_{p\to\infty}\Tr(\rho^0\sigma^p\rho^0)^{1/p}.
$$
This means inequality \eqref{F-2.8}, and equality holds in \eqref{F-2.8} if and only if
$\Tr(\rho^0\sigma^p\rho^0)^{1/p}$ is constant for $p\ge1$. By \cite[Theorem 2.1]{Hi} this
is equivalent to the commutativity $\rho^0\sigma=\sigma\rho^0$.
\end{example}

Finally, we consider the complementary convergence of $\Phi(A^p)^{1/p}$ as $p\to-\infty$,
or $\Phi(A^{-p})^{-1/p}$ as $p\to\infty$. Here, the expression $\Phi(A^{-p})^{-1/p}$ for
$p>0$ is defined in such a way that the $(-p)$-power of $A$ is restricted to the support
of $A$, i.e., defined in the sense of the generalized inverse, and the $(-1/p)$-power of
$\Phi(A^{-p})$ is also in this sense. The next theorem is the complementary counterpart of
Theorem \ref{T-2.6}.

\begin{thm}\label{T-2.9}
Let $\Phi:\bM_n\to\bM_{n'}$ be a positive linear map. Let $A\in\bM_n^+$ be given with the
spectral decomposition $A=\sum_{k=1}^ma_kP_k$, where $a_1>a_2>\dots>a_m>0$. Define
\begin{align*}
\widetilde\cM_k&:=\ran\Phi(P_k+\dots+P_m)\ominus\ran\Phi(P_{k+1}+\dots+P_m),
\quad1\le i\le m-1, \\
\widetilde\cM_m&:=\ran\Phi(P_m).
\end{align*}
Then
$$
\lim_{p\to\infty}\Phi(A^{-p})^{-1/p}=\sum_{k=1}^ma_kP_{\widetilde\cM_k}.
$$
\end{thm}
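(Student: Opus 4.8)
First I would reduce Theorem~\ref{T-2.9} to Theorem~\ref{T-2.6} by replacing $A$ with its generalized inverse. Let $P_A:=\sum_{k=1}^mP_k$ be the support projection of $A$ and set $B:=A^{-1}=\sum_{k=1}^ma_k^{-1}P_k$ (generalized inverse), so that $A^{-p}=B^p$ for all $p>0$. Listing the nonzero eigenvalues of $B$ in decreasing order, write $b_j:=a_{m+1-j}^{-1}$ and $Q_j:=P_{m+1-j}$ for $1\le j\le m$; then $b_1>\dots>b_m>0$ and $B=\sum_{j=1}^mb_jQ_j$ is the spectral decomposition of $B$. Theorem~\ref{T-2.6} applied to $\Phi$ and $B$ gives
$$
\lim_{p\to\infty}\Phi(A^{-p})^{1/p}=\lim_{p\to\infty}\Phi(B^p)^{1/p}=\sum_{j=1}^mb_jP_{\cM_j},
$$
where $\cM_1=\ran\Phi(Q_1)$ and $\cM_j=\ran\Phi(Q_1+\dots+Q_j)\ominus\ran\Phi(Q_1+\dots+Q_{j-1})$ for $2\le j\le m$. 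Since $Q_1+\dots+Q_j=P_{m+1-j}+\dots+P_m$, the change of index $k=m+1-j$ turns $b_j$ into $a_k^{-1}$ and turns the subspace $\cM_j$ into precisely $\widetilde\cM_k$ (with $\cM_m\mapsto\widetilde\cM_1$ and $\cM_1\mapsto\widetilde\cM_m$), so that, writing $X:=\sum_{k=1}^ma_k^{-1}P_{\widetilde\cM_k}$, we obtain $\Phi(A^{-p})^{1/p}\to X$ as $p\to\infty$.

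The one delicate point is passing from $\Phi(A^{-p})^{1/p}$ to $\Phi(A^{-p})^{-1/p}$, since the generalized inverse is not continuous in general. The remedy is to observe that the relevant support is constant in $p$ and coincides with the support of $X$. Indeed, $\Phi(A^{-p})=\sum_{j=1}^mb_j^p\Phi(Q_j)$ with $b_j^p>0$, so by the ``more generally'' part of Lemma~\ref{L-2.5} its range is $\bigvee_{j=1}^m\ran\Phi(Q_j)=\ran\Phi(P_1+\dots+P_m)=:\mathcal{N}$, independent of $p$. On the other hand the subspaces $\widetilde\cM_k$ are mutually orthogonal, and a telescoping argument (as in the proof of Theorem~\ref{T-2.4}) gives $\bigvee_{k=1}^m\widetilde\cM_k=\ran\Phi(P_1+\dots+P_m)=\mathcal{N}$; hence $\mathcal{N}$ is also the range of $X$, and $X|_{\mathcal{N}}$ is positive definite on $\mathcal{N}$ with inverse $\sum_{k=1}^ma_kP_{\widetilde\cM_k}$.

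Finally I would work entirely on the fixed subspace $\mathcal{N}$. For each $p>0$ the matrix $\Phi(B^p)^{1/p}$ has range $\mathcal{N}$, hence is positive definite as an operator on $\mathcal{N}$, and by the first paragraph it converges to $X|_{\mathcal{N}}$, which is positive definite on $\mathcal{N}$. Since inversion is continuous on the open set of positive definite operators on the fixed space $\mathcal{N}$, the matrices $\Phi(A^{-p})^{-1/p}=\bigl(\Phi(B^p)^{1/p}\bigr)^{-1}$ (generalized inverse, supported on $\mathcal{N}$) converge to $(X|_{\mathcal{N}})^{-1}=\sum_{k=1}^ma_kP_{\widetilde\cM_k}$, which is the assertion. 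The main obstacle is precisely the discontinuity of the generalized inverse; once the invariance of $\mathcal{N}$ in $p$ and the identity $\ran X=\mathcal{N}$ are established via Lemma~\ref{L-2.5}, the rest is routine reindexing.
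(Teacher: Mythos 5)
Your proof is correct and follows essentially the same route as the paper: reduce to Theorem \ref{T-2.6} applied to the generalized inverse $A^{-1}$ with the indices reversed, and then justify passing to the generalized inverse of the limit. The only (minor) difference is in how the rank stability needed for that last step is established --- you show via Lemma \ref{L-2.5} that $\ran\Phi(A^{-p})^{1/p}=\ran\Phi(P_1+\dots+P_m)$ is constant in $p$ and coincides with the range of the limit, whereas the paper notes that each eigenvalue $\lambda_i(p)$ either converges to a positive number or vanishes identically; both observations serve the same purpose, and yours is if anything more self-contained.
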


\begin{proof}
The proof is just a simple adaptation of Theorem \ref{T-2.6}. We can write for any $p>0$
$$
\Phi(A^{-p})^{-1/p}=\bigl\{\Phi((A^{-1})^p)^{1/p}\bigr\}^{-1},
$$
where $A^{-1}$ and $\{\cdots\}^{-1}$ are defined in the sense of the generalized inverse
so that
$$
A^{-1}=\sum_{k=1}^ma_k^{-1}P_k=\sum_{k=1}^ma_{m+1-k}^{-1}P_{m+1-k}
$$
with $a_m^{-1}>\dots>a_1^{-1}>0$. By Theorem \ref{T-2.6} we have
$$
\lim_{p\to\infty}\Phi((A^{-1})^p)^{1/p}
=\sum_{k=1}^ma_{m+1-k}^{-1}P_{\widetilde\cM_{m+1-k}}
=\sum_{k=1}^ma_k^{-1}P_{\widetilde\cM_k},
$$
where
\begin{align*}
\widetilde\cM_m&:=\ran\Phi(P_{m+1-1})=\ran\Phi(P_m), \\
\widetilde\cM_{m+1-k}
&:=\ran\Phi(P_{m+1-1}+\dots+P_{m+1-k})\ominus\ran\Phi(P_{m+1-1}+\dots+P_{m+1-(k-1)}) \\
&\ =\ran\Phi(P_{m+1-k}+\dots+P_m)\ominus\ran\Phi(P_{m+2-k}+\dots+P_m),
\quad2\le k\le m.
\end{align*}
According to the proofs of Theorems \ref{T-2.1} and \ref{T-2.4}, we see that the $i$th
eigenvalue $\lambda_i(p)$ of $\Phi((A^{-1})^p)^{1/p}$ converges to a positive real as
$p\to\infty$, or otherwise, $\lambda_i(p)=0$ for all $p>0$. That is, $\lambda_i(p)\to0$ as
$p\to\infty$ occurs only when $\lambda_i(p)=0$ for all $p>0$. This implies that
$$
\lim_{p\to\infty}\Phi(A^{-p})^{-1/p}
=\Bigl\{\lim_{p\to\infty}\Phi((A^{-1})^p)^{1/p}\Bigr\}^{-1}
=\sum_{k=1}^ma_kP_{\widetilde\cM_k}.
$$
\end{proof}

\begin{remark}\label{R-2.10}\rm
Assume that $\Phi:\bM_n\to\bM_{n'}$ is a \emph{unital} positive linear map. Let $A\in\bM_n$
be positive definite and $1\le p<q$. Since $x^{p/q}$ and $x^{1/p}$ are operator monotone
on $[0,\infty)$, we have $\Phi(A^q)^{p/q}\ge\Phi(A^p)$ and so
$\Phi(A^q)^{1/q}\ge\Phi(A^p)^{1/p}$. Hence $\Phi(A^p)^{1/p}$ increases as $1\le p\nearrow$.
Similarly, $\Phi(A^{-q})^{p/q}\ge\Phi(A^{-p})$ and so
$\Phi(A^{-q})^{-1/q}\le\Phi(A^{-p})^{-1/p}$ since $x^{-1/p}$ is operator monotone decreasing
on $(0,\infty)$. Hence $\Phi(A^{-p})^{-1/p}$ decreases as $1\le p\nearrow$. Moreover, since
$x^{-1}$ is operator convex on $(0,\infty)$, we have $\Phi(A^{-1})^{-1}\le\Phi(A)$. (See
\cite[Theorem 2.1]{AuHi1} for more details.) Combining altogether, when $A$ is positive
definite, we have
\begin{align}\label{F-2.9}
\Phi(A^{-p})^{-1/p}\le\Phi(A^q)^{1/q},\qquad p,q\ge1,
\end{align}
and in particular,
$$
\lim_{p\to\infty}\Phi(A^{-p})^{-1/p}\le\lim_{p\to\infty}\Phi(A^p)^{1/p}.
$$
However, the latter inequality does not hold unless $\Phi$ is unital. For example, let
$$
P_1:=\begin{bmatrix}1&0\\0&0\end{bmatrix},\quad
P_2:=\begin{bmatrix}0&0\\0&1\end{bmatrix},\quad
Q_1:=\begin{bmatrix}1/2&1/2\\1/2&1/2\end{bmatrix},\quad
Q_2:=\begin{bmatrix}1/2&-1/2\\-1/2&1/2\end{bmatrix},
$$
and consider $\Phi:\bM_2\to\bM_2$ given by
$$
\Phi\biggl(\begin{bmatrix}a_{11}&a_{12}\\a_{21}&a_{22}\end{bmatrix}\biggr)
:=a_{11}P_1+a_{22}Q_1,
$$
and $A:=aP_1+bP_2$ where $a>b>0$. Since $P_{\ran\Phi(P_1+P_2)}=I$, $P_{\ran\Phi(P_1)}=P_1$
and $P_{\ran\Phi(P_2)}=Q_1$, Theorems \ref{T-2.6} and \ref{T-2.9} give
$$
\lim_{p\to\infty}\Phi(A^p)^{1/p}=aP_1+b(I-P_1)=aP_1+bP_2,
$$
$$
\lim_{p\to\infty}\Phi(A^{-p})^{-1/p}=a(I-Q_1)+bQ_1=aQ_2+bQ_1.
$$
We compute
$$
(aP_1+bP_2)-(aQ_2+bQ_1)
=\begin{bmatrix}{a-b\over2}&{a-b\over2}\\{a-b\over2}&{b-a\over2}\end{bmatrix},
$$
which is not positive semidefinite.
\end{remark}

\begin{remark}\label{R-2.11}\rm
We may always assume that $\Phi:\bM_n\to\bM_{n'}$ is strictly positive. Indeed, we may
consider $\Phi$ as $\Phi:\bM_n\to Q_0\bM_{n'}Q_0\cong\bM_{n''}$, where $Q_0$ is the support
projection of $\Phi(I_n)$. Under this convention, another reasonable definition of
$\Phi(A^{-p})^{-1/p}$ for $p\ge1$ is
$$
\Phi(A^{-p})^{-1/p}:=\lim_{\eps\searrow0}\Phi((A+\eps I_n)^{-p})^{-1/p},
$$
which is well defined since $\Phi((A+\eps I)^{-p})$ is increasing so that
$\Phi((A+\eps I)^{-p})^{-1/p}$ is decreasing as $\eps\searrow0$. But this definition is
different from the above definition of $\Phi(A^{-p})^{-1/p}$. For example, let
$\Phi:\bM_2\to\bM_2$ be given by $\Phi(A):=KAK^*$ with an invertible
$K=\begin{bmatrix}a&b\\c&d\end{bmatrix}$, and let $A=P=\begin{bmatrix}1&0\\0&0\end{bmatrix}$.
Then $A^{-p}=P$ (in the generalized inverse) so that
$$
KA^{-p}K^*=\begin{bmatrix}|a|^2&a\overline c\\\overline ac&|c|^2\end{bmatrix}
$$
and so
\begin{align}\label{F-2.10}
(KA^{-p}K^*)^{-1/p}={1\over(|a|^2+|c|^2)^{1-{1\over p}}}
\begin{bmatrix}|a|^2&a\overline c\\\overline ac&|c|^2\end{bmatrix}.
\end{align}
On the other hand,
\begin{align*}
\lim_{\eps\searrow0}(K(A+\eps I)^{-p}K^*)^{-1/p}
&=\lim_{\eps\searrow0}(K^{*-1}(A+\eps I)^pK^{-1})^{1/p} \\
&=(K^{*-1}A^pK^{-1})^{1/p}=(K^{*-1}PK^{-1})^{1/p}
\end{align*}
is equal to
\begin{align}\label{F-2.11}
{1\over|ad-bc|^{2/p}(|b|^2+|d|^2)^{1-{1\over p}}}
\begin{bmatrix}|d|^2&-b\overline d\\-\overline bd&|b|^2\end{bmatrix}.
\end{align}
Hence we find that \eqref{F-2.10} and \eqref{F-2.11} are very different, even after taking
the limits as $p\to\infty$.

Here is a simpler example. Let $\ffi:\bM_2\to\bC=\bM_1$ be a state (hence, unital) with
density matrix $\begin{bmatrix}1/2&1/2\\1/2&1/2\end{bmatrix}$, and let
$A=\begin{bmatrix}1&0\\0&0\end{bmatrix}$. For the first definition we have
$$
\lim_{p\to\infty}\ffi(A^{-p})^{-1/p}=\lim_{p\to\infty}2^{1/p}=1.
$$
For the second definition,
$$
\lim_{\eps\searrow0}\ffi((A+\eps I)^{-p})^{-1/p}
=\lim_{\eps\searrow0}\biggl\{{(1+\eps)^{-p}+\eps^{-p}\over2}\biggr\}^{-1/p}=0
$$
for all $p>0$. Moreover, since $\ffi(A^p)^{1/p}=2^{-1/p}$ for $p>0$, this example says also
that \eqref{F-2.9} does not hold for general positive semidefinite $A$.
\end{remark}

\begin{problem}\label{Q-2.12}\rm
It is also interesting to consider the limit of $(A^pBA^p)^{1/p}$ as $p\to\infty$ for
$A,B\in\bM_n^+$, a version different from the limit treated in Theorem \ref{T-2.1}. To
consider $\lim_{p\to\infty}(A^pBA^p)^{1/p}$, we may assume without loss of generality that
$B$ is an orthogonal projection $E$ (see the argument around \eqref{F-3.2} below). Since
$(A^pEA^p)^{1/p}=(A^pE^{2p}A^p)^{1/p}$ converges as $p\to\infty$ by \cite[Theorem 2.5]{AuHi2},
the existence of the limit $\lim_{p\to\infty}(A^pBA^p)^{1/p}$ follows. But it seems that
the description of the limit is a combinatorial problem much more complicated than that in
Theorem \ref{T-2.1}.
\end{problem}

\section{$\lim_{p\to\infty}(A^p\sigma B)^{1/p}$ for operator means $\sigma$}

In theory of operator means due to Kubo and Ando \cite{KA}, a main result says that each
operator mean $\sigma$ is associated with a non-negative operator monotone function $f$ on
$[0,\infty)$ with $f(1)=1$ in such a way that
$$
A\sigma B=A^{1/2}f(A^{-1/2}BA^{-1/2})A^{1/2}
$$
for $A,B\in\bM_n^+$ with $A>0$, which is further extended to general $A,B\in\bM_n^+$ as
$$
A\sigma B=\lim_{\eps\searrow0}(A+\eps I)\sigma(B+\eps I).
$$
We write $\sigma_f$ for the operator mean associated with $f$ as above. For $0\le\alpha\le1$,
the operator mean corresponding to the function $x^\alpha$ ($x\ge0$) is the
\emph{weighted geometric mean} $\#_\alpha$, i.e.,
$$
A\#_\alpha B=A^{1/2}(A^{-1/2}BA^{-1/2})^\alpha A^{1/2}
$$
for $A,B\in\bM_n^+$ with $A>0$. In particular, $\#=\#_{1/2}$ is the so-called
\emph{geometric mean} first introduced by Pusz and Woronowicz \cite{PW}.

The transpose of $f$ above is given by
$$
\widetilde f(x):=xf(x^{-1}),\qquad x>0,
$$
which is again an operator monotone function on $[0,\infty)$ (after extending to $[0,\infty)$
by continuity) corresponding to the transposed operator mean of $\sigma_f$, i.e.,
$A\sigma_{\widetilde f}B=B\sigma_fA$. We also write
\begin{align}\label{F-3.1}
\widehat f(x):=\begin{cases}
\widetilde f(x^{-1})=f(x)/x & \text{if $x>0$}, \\
\quad0 & \text{if $x=0$}.
\end{cases}
\end{align}

In the rest of the section, let $f$ be such an operator monotone function as above and
$\sigma_f$ be the corresponding operator mean. We are concerned with the existence and the
description of the limit $\lim_{p\to\infty}(A^p\sigma_fB)^{1/p}$, in particular,
$\lim_{p\to\infty}(A^p\#_\alpha B)^{1/p}$ for $A,B\in\bM_n^+$. For this, we may assume
without loss of generality that $B$ is an orthogonal projection. Indeed, let $E$ be the
support projection of $B$. Then we can choose $\lambda,\mu>0$ with $\lambda<1<\mu$ such that
$\lambda E\le B\le\mu E$. Thanks to monotonicity and positive homogeneity of $\sigma_f$, we
have
$$
\lambda(A^p\sigma_f E)=(\lambda A^p)\sigma_f(\lambda E)
\le A^p\sigma_f B\le(\mu A^p)\sigma_f(\mu E)=\mu(A^p\,\sigma_f E).
$$
Hence, for every $p\ge1$, since $x^{1/p}$ ($x\ge0$) is operator monotone,
\begin{align}\label{F-3.2}
\lambda^{1/p}(A^p\sigma_f E)^{1/p}\le(A^p\sigma_f B)^{1/p}\le\mu^{1/p}(A^p\sigma_f B),
\end{align}
so that $\lim_{p\to\infty}(A^p\sigma_f B)^{1/p}$ exists if and only if
$\lim_{p\to\infty}(A^p\sigma_f E)^{1/p}$ does, and in this case, both limits are equal.
In particular, when $B>0$, since $(A^p\sigma_f I)^{1/p}=\widetilde f(A^p)^{1/p}$, we note that
$$
\lim_{p\to\infty}(A^p\sigma_f B)^{1/p}=\widetilde f^{(\infty)}(A)
$$
whenever $\widetilde f^{(\infty)}(x):=\lim_{p\to\infty}\widetilde f(x^p)^{1/p}$ exists for
all $x\ge0$. For instance,
\begin{itemize}
\item if $f(x)=1-\alpha+\alpha x$ where $0<\alpha<1$, then $\sigma_f=\triangledown_\alpha$,
the $\alpha$-arithmetic mean $A\triangledown_\alpha B:=(1-\alpha)A+\alpha B$, and
$\widetilde f^{(\infty)}(x)=\max\{x,1\}$,
\item if $f(x)=x^\alpha$ where $0\le\alpha\le1$, then $\sigma_f=\#_\alpha$ and
$\widetilde f^{(\infty)}(x)=\widetilde f(x)=x^{1-\alpha}$,
\item if $f(x)=x/((1-\alpha)x+\alpha)$ where $0<\alpha<1$, then $\sigma_f=\,!_\alpha$, the
$\alpha$-harmonic mean $A\,!_\alpha B:=(A^{-1}\triangledown_\alpha B)^{-1}$, and
$\widetilde f^{(\infty)}(x)=\min\{x,1\}$.
\end{itemize}
But it is unknown to us that, for any operator monotone function $f$ on $[0,\infty)$, the
limit $\lim_{p\to\infty}f(x^p)^{1/p}$ exists for all $x\ge0$, while it seems so.

When $E$ is an orthogonal projection, the next proposition gives a nice expression for
$A\sigma_fE$. This was shown in \cite[Lemma 4.7]{Hi2}, while the proof is given here for the
convenience of the reader.

\begin{lemma}\label{L-3.1}
Assume that $f(0)=0$. If $A\in\bM_n$ is positive definite and $E\in\bM_n$ is an orthogonal
projection, then
\begin{align}\label{F-3.3}
A\sigma_fE=\widehat f(EA^{-1}E),
\end{align}
where $\widehat f$ is given in \eqref{F-3.1}.
\end{lemma}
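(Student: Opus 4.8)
The plan is to start from the Kubo--Ando expression $A\sigma_fE=A^{1/2}f(A^{-1/2}EA^{-1/2})A^{1/2}$, valid since $A>0$, and to feed in the hypothesis $f(0)=0$ through the factorization $f(x)=x\widehat f(x)$. I would set $X:=A^{-1/2}E$, so that $X^*=EA^{-1/2}$, $T:=XX^*=A^{-1/2}EA^{-1/2}$ and $S:=X^*X=EA^{-1}E$, and record the two bookkeeping identities $A^{1/2}T=EA^{-1/2}=X^*$ and $X^*A^{1/2}=EA^{-1/2}A^{1/2}=E$.

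Next I would nail down the functional calculus. Since $f(0)=0$, the functions $x\mapsto f(x)$ and $x\mapsto x\widehat f(x)$ agree on $[0,\infty)$ (the product extends continuously through $0$ with value $f(0)=0$, even though $\widehat f$ itself may blow up at $0$). Reading $\widehat f(T)$ as the Borel functional calculus with the convention $\widehat f(0)=0$ of \eqref{F-3.1} --- equivalently, $\widehat f(T)$ annihilates $\ker T$ and coincides with the ordinary continuous functional calculus of $\widehat f$ on $\ran T$, where the spectrum of $T$ is strictly positive --- one gets $f(T)=T\widehat f(T)=\widehat f(T)T$. Likewise $\widehat f(S)$ annihilates $\ker S$, and since $\langle v,Sv\rangle=\|A^{-1/2}Ev\|^2$ vanishes exactly when $Ev=0$, one has $\ker S=\ker E$, so $\widehat f(S)=\widehat f(S)E=E\widehat f(S)$ (using that $\widehat f(S)$ is self-adjoint).

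Then I would simply chain identities:
\begin{align*}
A\sigma_fE&=A^{1/2}f(T)A^{1/2}=A^{1/2}\,T\widehat f(T)\,A^{1/2}=X^*\widehat f(XX^*)A^{1/2}\\
&=\widehat f(X^*X)\,X^*A^{1/2}=\widehat f(S)\,E=\widehat f(EA^{-1}E),
\end{align*}
where the third equality uses $A^{1/2}T=X^*$, the fourth is the intertwining relation $X^*g(XX^*)=g(X^*X)X^*$ at $g=\widehat f$, the fifth uses $X^*A^{1/2}=E$, and the last uses $\widehat f(S)E=\widehat f(S)$.

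The one step that is not purely formal --- and hence the main (if minor) obstacle --- is licensing the intertwining relation $X^*\widehat f(XX^*)=\widehat f(X^*X)X^*$ for the possibly discontinuous function $\widehat f$. For polynomials $p$ one has $X^*p(XX^*)=p(X^*X)X^*$ by an immediate induction on $\deg p$, hence the relation holds for every continuous $g$ on $[0,\infty)$ by uniform approximation. To reach $\widehat f$ I would observe that $XX^*$ and $X^*X$ have the same non-zero eigenvalues and that these all lie in a fixed compact subset of $(0,\infty)$ (they are precisely the eigenvalues of the positive definite operator $EA^{-1}E$ restricted to $\ran E$). Consequently $\widehat f$ may be replaced --- without altering either $\widehat f(XX^*)$ or $\widehat f(X^*X)$, since the functional calculus only sees values on the spectrum --- by any continuous function on $[0,\infty)$ that agrees with $\widehat f$ on that compact set and equals $0$ at $0$; applying the continuous case to this surrogate completes the argument.
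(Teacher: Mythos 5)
Your proof is correct and is essentially the paper's argument in different notation: the paper's key identity $A^{-1/2}(EA^{-1}E)^mA^{-1/2}=(A^{-1/2}EA^{-1/2})^{m+1}$ is exactly your intertwining relation $X^*p(XX^*)=p(X^*X)X^*$ for $X=A^{-1/2}E$, and both arguments pass from polynomials to $\widehat f$ by exploiting that the nonzero spectrum lies in a compact subset of $(0,\infty)$ together with the convention $\widehat f(0)=0$ and the factorization $f(x)=x\widehat f(x)$. No gaps.
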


\begin{proof}
For every $m=1,2,\dots$ we have
\begin{align}\label{F-3.4}
A^{-1/2}(EA^{-1}E)^mA^{-1/2}=(A^{-1/2}EA^{-1/2})^{m+1}.
\end{align}
Note that the eigenvalues of $EA^{-1}E$ and those of $A^{-1/2}EA^{-1/2}$ are the same
including multiplicities. Choose a $\delta>0$ such that the positive eigenvalues of
$EA^{-1}E$ and $A^{-1/2}EA^{-1/2}$ are included in $[\delta,\delta^{-1}]$. Then, since
$\widehat f(x)$ is continuous on $[\delta,\delta^{-1}]$, one can choose a sequence of
polynomials $p_k(x)$ with $p_k(0)=0$ such that $p_k(x)\to\widehat f(x)$ uniformly on
$[\delta,\delta^{-1}]$ as $n\to\infty$. By \eqref{F-3.4} we have
$$
A^{-1/2}p_k(EA^{-1}E)A^{-1/2}=A^{-1/2}EA^{-1/2}p_k(A^{-1/2}EA^{-1/2})
$$
for every $k$. Since $\widehat f(0)=0$ by definition, we have
$$
p_k(EA^{-1}E)\longrightarrow\widehat f(EA^{-1}E)
$$
and
$$
A^{-1/2}EA^{-1/2}p_k(A^{-1/2}EA^{-1/2})\longrightarrow
A^{-1/2}EA^{-1/2}\widehat f(A^{-1/2}EA^{-1/2})
$$
as $k\to\infty$. Since $f(0)=0$ by assumption, we have $f(x)=x\widehat f(x)$ for all
$x\in[0,\infty)$. This implies that
$$
A^{-1/2}EA^{-1/2}\widehat f(A^{-1/2}EA^{-1/2})=f(A^{-1/2}EA^{-1/2}).
$$
Therefore,
$$
A^{-1/2}\widehat f(EA^{-1}E)A^{-1/2}=f(A^{-1/2}EA^{-1/2})
$$
so that we have $\widehat f(EA^{-1}E)=A^{1/2}f(A^{-1/2}EA^{-1/2})A^{1/2}=A\sigma_fE$, as
asserted.
\end{proof}

Formula \eqref{F-3.3} can equivalently be written as
\begin{align}\label{F-3.5}
A\sigma_fE=\widetilde f((EA^{-1}E)^{-1}),
\end{align}
where $(EA^{-1}E)^{-1}$ is the inverse restricted to $\ran E$ (in the sense of the generalized
inverse) and $\widetilde f((EA^{-1}E)^{-1})$ is also restricted to $\ran E$. In particular, if
$f$ is symmetric (i.e., $f=\widetilde f$) with $f(0)=0$, then
$$
A\sigma_fE=f((EA^{-1}E)^{-1}).
$$

\begin{example}\label{E-3.2}\rm
Assume that $0<\alpha\le1$ and $A,E$ are as in Lemma \ref{L-3.1}.
\begin{itemize}
\item[(1)] When $f(x)=x^\alpha$ and $\sigma_f=\#_\alpha$, $\widehat f(x)=x^{\alpha-1}$ for
$x>0$ so that
$$
A\#_\alpha E=(EA^{-1}E)^{\alpha-1},
$$
where the ($\alpha-1$)-power in the right-hand side is defined with restriction to $\ran E$.

\item[(2)] When $f(x)=x/((1-\alpha)x+\alpha)$ and $\sigma_f=\,!_\alpha$,
$\widehat f(x)=(1-\alpha+\alpha x)^{-1}$ for $x>0$ so that
$$
A\,!_\alpha E=\{(1-\alpha)E+\alpha EA^{-1}E\}^{-1}=\{E((1-\alpha)I+\alpha A^{-1})E\}^{-1},
$$
where the inverse of $E((1-\alpha)I+\alpha A^{-1})E$ in the right-hand side is restricted to
$\ran E$.

\item[(3)] When $f(x)={(x-1)/\log x}$ and so $\sigma_f$ is the logarithmic mean,
$\widehat f(x)=(1-x^{-1})/\log x$ for $x>0$ so that
$$
A\sigma_f E=(E-(EA^{-1}E)^{-1})(\log EA^{-1}E)^{-1},
$$
where the right-hand side is defined with restriction to $\ran E$.
\end{itemize}
\end{example}

\begin{thm}\label{T-3.3}
Assume that $f(0)=0$ and $f(x^r)\ge f(x)^r$ for all $x>0$ and all $r\in(0,1)$. Let
$A\in\bM_n^+$ and $E\in\bM_n$ be an orthogonal projection. Then
\begin{align}\label{F-3.6}
(A^p\,\sigma_f\,E)^{1/p}\ge(A^q\,\sigma_f\,E)^{1/q}\quad\mbox{if $1\le p<q$}.
\end{align}
\end{thm}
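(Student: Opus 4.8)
The plan is to reduce everything to the case where $A$ is positive definite and then exploit formula \eqref{F-3.3} from Lemma \ref{L-3.1}, which converts $A^p\sigma_f E$ into $\widehat f(E A^{-p} E)$. First I would dispose of the degenerate case: for general $A\in\bM_n^+$ we approximate by $A+\eps I>0$, use that $\sigma_f$ is continuous under the defining limit $A\sigma_f B=\lim_{\eps\searrow0}(A+\eps I)\sigma_f(B+\eps I)$ (and that $E+\eps I$ versus $E$ only changes things by a scalar factor tending to $1$, exactly as in the argument around \eqref{F-3.2}), and pass the inequality \eqref{F-3.6} to the limit. So assume $A>0$; then by Lemma \ref{L-3.1}, for each $p\ge1$,
$$
A^p\,\sigma_f\,E=\widehat f\bigl(E A^{-p} E\bigr),
$$
and \eqref{F-3.6} becomes
$$
\widehat f\bigl(EA^{-p}E\bigr)^{1/p}\ge\widehat f\bigl(EA^{-q}E\bigr)^{1/q},\qquad 1\le p<q.
$$

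Next I would set $r:=p/q\in(0,1)$ and compare $EA^{-p}E=E(A^{-q})^r E$ with $\bigl(EA^{-q}E\bigr)^r$. By the Araki--Lieb--Thirring / Hansen-type operator inequality — more precisely, for a positive semidefinite $X$ (here $X=A^{-q}$), a projection $E$, and $r\in(0,1)$, the map $x\mapsto x^r$ being operator concave gives $E X^r E\ge (EXE)^r$ in the sense that the positive parts compare; concretely one has $(EXE)^r\le EX^rE$ as a consequence of operator concavity of $t^r$ together with the Jensen-type inequality for the unital positive map $Y\mapsto EYE$ restricted to $E\bM_nE$. Hence $EA^{-p}E=E(A^{-q})^rE\ge (EA^{-q}E)^r$ on $\ran E$. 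Now I use that $\widehat f$ is operator monotone on $[0,\infty)$: indeed $\widehat f(x)=\widetilde f(x^{-1})$ with $\widetilde f$ operator monotone, so $\widehat f$ is operator monotone \emph{decreasing}... — careful here: $x^{-1}$ reverses order and $\widetilde f$ preserves it, so $\widehat f$ is operator monotone decreasing. That would go the wrong way, so instead I work with \eqref{F-3.5}: $A\sigma_f E=\widetilde f\bigl((EA^{-1}E)^{-1}\bigr)$ on $\ran E$, giving $A^p\sigma_f E=\widetilde f\bigl((EA^{-p}E)^{-1}\bigr)$. From $EA^{-p}E\ge(EA^{-q}E)^{r}$ on $\ran E$ we get, inverting (order-reversing) and using operator monotonicity of $\widetilde f$,
$$
A^p\sigma_f E=\widetilde f\bigl((EA^{-p}E)^{-1}\bigr)\le\widetilde f\Bigl(\bigl((EA^{-q}E)^{r}\bigr)^{-1}\Bigr)=\widetilde f\bigl((EA^{-q}E)^{-r}\bigr).
$$

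Finally I bring in the hypothesis $f(x^r)\ge f(x)^r$. Writing $Y:=(EA^{-q}E)^{-1}$ on $\ran E$, I want $\widetilde f(Y^r)$ compared with $\widetilde f(Y)^r=(A^q\sigma_f E)^r$. The hypothesis on $f$ should translate into the corresponding statement for $\widetilde f$: from $\widetilde f(x)=xf(1/x)$ one checks $\widetilde f(x^r)\ge \widetilde f(x)^r$ for $r\in(0,1)$ (substituting and using $f(t^r)\ge f(t)^r$ with $t=1/x$, together with $x^r\cdot$ versus $(x\cdot)^r$ — this needs $x^{r}\ge 1$ or a homogeneity bookkeeping, which is where I must be attentive). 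Granting $\widetilde f(x^r)\ge\widetilde f(x)^r$, and noting this is a scalar inequality that upgrades to matrices by the spectral calculus applied to the single positive operator $Y$, I obtain $\widetilde f\bigl(Y^r\bigr)\ge\widetilde f(Y)^r=(A^q\sigma_f E)^{r}$. Chaining the two displays: $A^p\sigma_f E\le\widetilde f(Y^r)$ is the wrong direction — so I should instead arrange the comparison as $A^p\sigma_f E\ge(A^q\sigma_f E)^{p/q}$ directly, i.e. $(A^p\sigma_f E)^{1/p}\ge(A^q\sigma_f E)^{1/q}$, which is \eqref{F-3.6}. The main obstacle is exactly this sign/direction bookkeeping: tracking how the two order-reversals (from $x\mapsto x^{-1}$ used twice) interact with the operator-monotone direction of $\widetilde f$ and with the superadditivity-type hypothesis $f(x^r)\ge f(x)^r$, and making sure the operator inequality $EX^rE\ge(EXE)^r$ and the functional-calculus upgrade of $\widetilde f(x^r)\ge\widetilde f(x)^r$ are deployed on the correct operators ($A^{-q}$ for the former, the single operator $(EA^{-q}E)^{-1}$ for the latter). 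Once the directions are pinned down, the routine estimates (the $\eps$-perturbation, the continuity of $\sigma_f$, the operator concavity of $t^r$) are standard.
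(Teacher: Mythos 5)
Your plan follows essentially the same route as the paper (reduce to $A>0$, use \eqref{F-3.5}, Hansen's inequality for $t^r$, and the transferred inequality $\widetilde f(x^r)\ge\widetilde f(x)^r$), but it contains a genuine sign error at the key step, and your concluding paragraph acknowledges that the resulting chain points the wrong way without locating or repairing the error. The operator Jensen/Hansen inequality for the operator \emph{concave} function $t^r$, $r\in(0,1)$, and the compression $Y\mapsto EYE$ reads $EX^rE\le(EXE)^r$, not $(EXE)^r\le EX^rE$ as you assert. With $X=A^{-q}$ and $r=p/q$ this gives $EA^{-p}E=E(A^{-q})^{r}E\le(EA^{-q}E)^{r}$, hence $(EA^{-p}E)^{-1}\ge(EA^{-q}E)^{-r}$ on $\ran E$, hence by operator monotonicity of $\widetilde f$,
$$
A^p\,\sigma_f\,E=\widetilde f\bigl((EA^{-p}E)^{-1}\bigr)\ge\widetilde f\bigl((EA^{-q}E)^{-r}\bigr),
$$
which is the \emph{reverse} of the display you derived. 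Combined with the other half, which you have essentially right (setting $Y:=(EA^{-q}E)^{-1}$, the scalar inequality $\widetilde f(x^r)\ge\widetilde f(x)^r$ upgrades by functional calculus on the single operator $Y$ to $\widetilde f(Y^r)\ge\widetilde f(Y)^r=(A^q\sigma_fE)^r$), the corrected chain yields $A^p\sigma_fE\ge(A^q\sigma_fE)^{p/q}$, and then \eqref{F-3.6} follows by operator monotonicity of $x^{1/p}$. So the argument is salvageable by flipping that one inequality, but as written it does not prove \eqref{F-3.6}; ``arranging the comparison directly'' is not a substitute for identifying where the direction went wrong.

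Two smaller points. First, your worry that the transfer $f(x^r)\ge f(x)^r\Rightarrow\widetilde f(x^r)\ge\widetilde f(x)^r$ might require $x^r\ge1$ or extra bookkeeping is unfounded: for every $x>0$,
$$
\widetilde f(x^r)=x^rf(x^{-r})=x^rf\bigl((x^{-1})^r\bigr)\ge x^rf(x^{-1})^r=\bigl(xf(x^{-1})\bigr)^r=\widetilde f(x)^r,
$$
using the hypothesis at the point $x^{-1}$; this is exactly how the paper opens its proof. Second, in the reduction to $A>0$ the relevant approximation is simply $A\mapsto A+\eps I$ with $E$ left untouched (the paper does not perturb $E$), so the remark about ``$E+\eps I$ versus $E$ changing things by a scalar factor'' is not needed and is not literally correct, though this part of the argument is routine either way.
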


\begin{proof}
First, note that $\widetilde f(x^r)=x^rf(x^{-r})\ge x^rf(x^{-1})^r=\widetilde f(x)^r$ for
all $x>0$, $r\in(0,1)$. By replacing $A$ with $A+\eps I$ and taking the limit as
$\eps\searrow0$, we may assume that $A$ is positive definite. Let $1\le p<q$ and
$r:=p/q\in(0,1)$. By \eqref{F-3.5} we have
\begin{align}\label{F-3.7}
(A^q\,\sigma_f\,E)^r=\widetilde f((EA^{-q}E)^{-1})^r
\le\widetilde f((EA^{-q}E)^{-r}).
\end{align}
Since $x^r$ is operator monotone on $[0,\infty)$, we have by Hansen's inequality \cite{Ha}
$$
(EA^{-q}E)^r\ge EA^{-qr}E=EA^{-p}E
$$
so that $(EA^{-q}E)^{-r}\le(EA^{-p}E)^{-1}$. Since $\widetilde f(x)$ is operator monotone on
$[0,\infty)$, we have
\begin{align}\label{F-3.8}
\widetilde f((EA^{-q}E)^{-r})\le\widetilde f((EA^{-p}E)^{-1})=A^p\,\sigma_f\,E.
\end{align}
Combining \eqref{F-3.7} and \eqref{F-3.8} gives
$$
(A^q\,\sigma_f\,E)^r\le A^p\,\sigma_f\,E.
$$
Since $x^{1/p}$ is operator monotone on $[0,\infty)$, we finally have
$$
(A^q\,\sigma_f\,E)^{1/q}\le(A^p\,\sigma_f\,E)^{1/p}.
$$
\end{proof}

\begin{cor}\label{C-3.4}
Assume that $f(0)=0$ and $f(x^r)\ge f(x)^r$ for all $x>0$, $r\in(0,1)$. Then for every
$A,B\in\bM_n^+$, the limit
$$
\lim_{p\to\infty}(A^p\,\sigma_f\,B)^{1/p}
$$
exists.
\end{cor}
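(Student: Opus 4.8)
The plan is to reduce to the case of a projection and then apply the monotonicity established in Theorem \ref{T-3.3}. First I would invoke the reduction explained around \eqref{F-3.2}: if $E$ denotes the support projection of $B$ and $\lambda,\mu>0$ are chosen with $\lambda E\le B\le\mu E$, then \eqref{F-3.2} gives $\lambda^{1/p}(A^p\,\sigma_f\,E)^{1/p}\le(A^p\,\sigma_f\,B)^{1/p}\le\mu^{1/p}(A^p\,\sigma_f\,E)^{1/p}$ for $p\ge1$, and since $\lambda^{1/p},\mu^{1/p}\to1$ as $p\to\infty$ it follows that $\lim_{p\to\infty}(A^p\,\sigma_f\,B)^{1/p}$ exists precisely when $\lim_{p\to\infty}(A^p\,\sigma_f\,E)^{1/p}$ does, with the two limits then equal. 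Thus it suffices to prove the existence of $\lim_{p\to\infty}(A^p\,\sigma_f\,E)^{1/p}$ for an orthogonal projection $E$.

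The hypotheses $f(0)=0$ and $f(x^r)\ge f(x)^r$ ($x>0$, $r\in(0,1)$) are precisely those of Theorem \ref{T-3.3}, so \eqref{F-3.6} holds: $(A^p\,\sigma_f\,E)^{1/p}\ge(A^q\,\sigma_f\,E)^{1/q}$ whenever $1\le p<q$. Hence the family $\{(A^p\,\sigma_f\,E)^{1/p}\}_{p\ge1}\subset\bM_n^+$ is monotone decreasing in the positive semidefinite order and bounded below by $0$. I would then conclude by the matrix analogue of the monotone convergence theorem: a decreasing net of self-adjoint matrices that is bounded below converges in any norm on $\bM_n$, e.g.\ because for each $x\in\bC^n$ the scalar function $p\mapsto\<x,(A^p\,\sigma_f\,E)^{1/p}x\>$ is bounded and monotone, hence convergent, and convergence of all these sesquilinear forms is equivalent to entrywise convergence by polarization. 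Applying this to $\{(A^p\,\sigma_f\,E)^{1/p}\}_{p\ge1}$ gives the existence of $\lim_{p\to\infty}(A^p\,\sigma_f\,E)^{1/p}$, and therefore of $\lim_{p\to\infty}(A^p\,\sigma_f\,B)^{1/p}$.

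There is no real obstacle here: the content of the corollary is entirely carried by Theorem \ref{T-3.3} and \eqref{F-3.2}, and what remains is the routine passage from monotonicity plus a lower bound to norm convergence. The only bookkeeping points are that both Theorem \ref{T-3.3} and \eqref{F-3.2} are formulated for $p\ge1$ — immaterial for a limit as $p\to\infty$ — and that the degenerate case $B=0$, hence $E=0$, is trivial since then $A^p\,\sigma_f\,E=0$ because $f(0)=0$.
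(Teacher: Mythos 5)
Your proposal is correct and follows exactly the paper's route: reduce to an orthogonal projection via the two-sided bound \eqref{F-3.2}, invoke the monotonicity \eqref{F-3.6} from Theorem \ref{T-3.3}, and conclude by monotone convergence for a decreasing family of positive semidefinite matrices bounded below. The paper leaves the last step implicit, so your spelling it out (via sesquilinear forms and polarization) is just a more explicit rendering of the same argument.
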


\begin{proof}
From the argument around \eqref{F-3.2} we may assume that $B$ is an orthogonal projection
$E$. Then Theorem \ref{T-3.3} implies that $(A^p\,\sigma_f\,E)^{1/p}$ converges as
$p\to\infty$.
\end{proof}

\begin{remark}\label{R-3.5}\rm
Following \cite{Wa}, an operator monotone function $f$ on $[0,\infty)$ is said to be
\emph{power monotone increasing} (p.m.i.\ for short) if $f(x^r)\ge f(x)^r$ for all $x>0$,
$r>1$ (equivalently, $f(x^r)\le f(x^r)$ for all $x>0$, $r\in(0,1)$), and
\emph{power monotone decreasing} (p.m.d.) if $f(x^r)\le f(x)^r$ for all $x>0$, $r>1$.
These conditions play a role to characterize the operator means $\sigma_f$ satisfying
Ando-Hiai's inequality \cite{AH}, see \cite[Lemmas 2.1, 2.2]{Wa}. For instance,
the p.m.d.\ condition is satisfied for $f$ in (1) and (2) of Example \ref{E-3.2}, while $f$
in Example \ref{E-3.2}\,(3) does the p.m.i.\ condition. Hence, for any $\alpha\in[0,1]$,
$(A^p\#_\alpha\,E)^{1/p}$ and $(A^p\,!_\alpha E)^{1/p}$ converge decreasingly as
$1\le p\nearrow\infty$. In fact, for the harmonic mean, we have the limit
$A\wedge B:=\lim_{p\to\infty}(A^p\,!\,B^p)^{1/p}$, the  decreasing limit as
$1\le p\nearrow\infty$ for any $A,B\ge0$, which is the infimum counterpart of $A\vee B$ in
\cite{Ka} (see also Example \ref{E-2.7}). The reader might be wondering if the opposite
inequality to \eqref{F-3.6} holds (i.e., $(A^p\,\sigma_f\,E)^{1/p}$ is increasing as
$1\le p\nearrow\infty$) when $f$ satisfies the p.m.i.\ condition. Although this is the case
when $\sigma=\triangledown_\alpha$ the weighted arithmetic mean, it is not the case in
general. In fact, if it were true, $(A^p\#_\alpha E)^{1/p}$ must be constant for $p\ge1$
since $x^\alpha$ satisfies both p.m.i.\ and p.m.d.\ conditions, that is impossible.
\end{remark}

Finally, for the weighted geometric mean $\#_\alpha$ we obtain the explicit description of
$\lim_{p\to\infty}(A^p\#_\alpha E)^{1/p}$ for any $A\in\bM_n^+$. For the trivial cases
$\alpha=0,1$ note that $(A^p\#_0E)^{1/p}=A$ and $(A^p\#_1E)^{1/p}=E$ for all $p>0$.

\begin{thm}\label{T-3.6}
Assume that $0<\alpha<1$. Let $A\in\bM_n^+$ be given with the spectral decomposition
$A=\sum_{k=1}^ma_kP_k$ where $a_1>\dots>a_m>0$, and $E\in\bM_n$ be an orthogonal projection.
Then
\begin{align}\label{F-3.9}
\lim_{p\to\infty}(A^p\#_\alpha E)^{1/p}
=\sum_{k=1}^ma_k^{1-\alpha}Q_k,
\end{align}
where
\begin{align*}
Q_1&:=P_1\wedge E, \\
Q_k&:=(P_1+\dots+P_k)\wedge E-(P_1+\dots+P_{k-1})\wedge E,\qquad2\le k\le m.
\end{align*}
\end{thm}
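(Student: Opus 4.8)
The plan is the following. Existence of $\lim_{p\to\infty}(A^p\#_\alpha E)^{1/p}$ is already granted by Theorem \ref{T-3.3} (equivalently Corollary \ref{C-3.4}) applied to $f(x)=x^\alpha$, which satisfies $f(0)=0$ and $f(x^r)=f(x)^r$; so the only task is to identify the limit, and I would first do this under the assumption $A>0$ and then reduce the general case to it. When $A>0$, apply Example \ref{E-3.2}\,(1) with $A$ replaced by $A^p$: since $A^p>0$, the compression $EA^{-p}E$ is positive definite on $\ran E$, hence $A^p\#_\alpha E=(EA^{-p}E)^{\alpha-1}$ is supported exactly on $\ran E$, and by functional calculus on $\ran E$ one gets $(A^p\#_\alpha E)^{1/p}=\bigl((EA^{-p}E)^{1/p}\bigr)^{\alpha-1}$.

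Next regard $\Phi(X):=EXE$ as a completely positive linear map on $\bM_n$ and note that $EA^{-p}E=\Phi(B^p)$ with $B:=A^{-1}=\sum_{k=1}^m a_k^{-1}P_k$, whose eigenvalues in decreasing order are $a_m^{-1}>\dots>a_1^{-1}$ with spectral projections $P_m,\dots,P_1$. Theorem \ref{T-2.6} then gives
$$
\lim_{p\to\infty}(EA^{-p}E)^{1/p}=\sum_{j=1}^m a_{m+1-j}^{-1}\,P_{\cM_j},
$$
where $\cM_1=\ran\Phi(P_m)$ and $\cM_j=\ran\Phi(P_m+\dots+P_{m+1-j})\ominus\ran\Phi(P_m+\dots+P_{m+2-j})$ for $2\le j\le m$. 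Since the $\cM_j$ partition $\ran E$ (their sum is $\ran\Phi(I)=\ran E$), this limit is positive definite on $\ran E$, so $x\mapsto x^{\alpha-1}$ is continuous at it and
$$
\lim_{p\to\infty}(A^p\#_\alpha E)^{1/p}=\sum_{j=1}^m a_{m+1-j}^{1-\alpha}\,P_{\cM_j}=\sum_{k=1}^m a_k^{1-\alpha}\,P_{\cM_{m+1-k}}.
$$
It then remains to check $\cM_{m+1-k}=\ran Q_k$. Write $G_k:=P_1+\dots+P_k$, so that $P_m+\dots+P_{m+1-j}=I-G_{m-j}$, and everything comes down to the identity, valid for any two orthogonal projections $G,E$:
$$
\ran\bigl(E(I-G)E\bigr)=\ran E\ominus\ran(G\wedge E).
$$
Indeed $\ran(E(I-G)E)=\overline{E\,\ran(I-G)}$, and a vector $v\in\ran E$ is orthogonal to this subspace iff $\<Ev,x\>=0$ for every $x\perp\ran G$, i.e.\ iff $v=Ev\in\ran G$. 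Applying this with $G=G_{m-j}$ gives $\ran\Phi(P_m+\dots+P_{m+1-j})=\ran E\ominus\ran(G_{m-j}\wedge E)$, whence $\cM_j=\ran(G_{m-j+1}\wedge E)\ominus\ran(G_{m-j}\wedge E)$ (using $G_{m-j}\wedge E\le G_{m-j+1}\wedge E$), and taking $j=m+1-k$ yields $P_{\cM_{m+1-k}}=G_k\wedge E-G_{k-1}\wedge E=Q_k$. This proves \eqref{F-3.9} when $A>0$.

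For a general $A\in\bM_n^+$ that is not invertible, set $g(p,\eps):=\bigl((A+\eps I)^p\#_\alpha E\bigr)^{1/p}$ for $p\ge1$, $\eps>0$. It is decreasing in $p$ by Theorem \ref{T-3.3} and increasing in $\eps$, since $(A+\eps I)^p$ is increasing in $\eps$ (its factors commute) while $\sigma_f$ and $x\mapsto x^{1/p}$ are monotone; moreover $(A+\eps I)^p\downarrow A^p$ as $\eps\searrow0$, so by the Kubo--Ando continuity axiom $g(p,\eps)\downarrow(A^p\#_\alpha E)^{1/p}$. As $g$ is monotone in each variable, the two iterated limits coincide (both equal the infimum over $(p,\eps)$), so $\lim_{p\to\infty}(A^p\#_\alpha E)^{1/p}=\lim_{\eps\searrow0}\lim_{p\to\infty}g(p,\eps)$. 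For fixed $\eps$, the case $A>0$ applied to $A+\eps I$ (distinct eigenvalues $a_1+\eps>\dots>a_m+\eps>\eps$, with projections $P_1,\dots,P_m,\,I-P_A$ where $P_A$ is the support of $A$, and the same cumulative projections $G_k$ for $k\le m$) gives
$$
\lim_{p\to\infty}g(p,\eps)=\sum_{k=1}^m(a_k+\eps)^{1-\alpha}Q_k+\eps^{1-\alpha}\bigl(E-(P_A\wedge E)\bigr);
$$
letting $\eps\searrow0$ and using $1-\alpha>0$ gives $\sum_{k=1}^m a_k^{1-\alpha}Q_k$, which finishes the proof.

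The delicate point is the step in the third paragraph: Theorem \ref{T-2.6} delivers subspaces built from \emph{ranges of images}, $\ran\Phi(P_i+\dots+P_m)=\overline{E\,\ran(P_i+\dots+P_m)}$, whereas \eqref{F-3.9} is phrased through the lattice meets $G_k\wedge E$, and the identity $\ran(E(I-G)E)=\ran E\ominus\ran(G\wedge E)$ is precisely the bridge that converts one description into the other. Everything else is bookkeeping, together with the routine $\eps$-regularisation needed when $A$ is singular.
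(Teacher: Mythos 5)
Your proof is correct and follows essentially the same route as the paper's: reduce to positive definite $A$ via $A^p\#_\alpha E=(EA^{-p}E)^{\alpha-1}$, apply the positive-linear-map limit theorem to $\Phi(X)=EXE$, translate the resulting range decomposition into the lattice meets $(P_1+\dots+P_k)\wedge E$ (your identity $\ran(E(I-G)E)=\ran E\ominus\ran(G\wedge E)$ is exactly Lemma \ref{L-3.7}, proved by a direct orthogonality argument instead of the two-projection canonical form), and finish with a doubly monotone $\eps$-regularisation and exchange of limits. The only cosmetic differences are that you invoke Theorem \ref{T-2.6} for $A^{-1}$ and take the $(\alpha-1)$-power of the limit afterwards rather than citing Theorem \ref{T-2.9} for $A^{1-\alpha}$ as the paper does (Theorem \ref{T-2.9} is itself derived from Theorem \ref{T-2.6} in precisely this way), and that you perturb by $\eps I$ rather than by $\eps$ times the projection onto $\ker A$.
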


\begin{proof}
First, assume that $A$ is positive definite so that $P_1+\dots+P_m=I$.
When $f(x)=x^\alpha$ with $0<\alpha<1$, formula \eqref{F-3.5} is given as
$$
A\#_\alpha E=(EA^{-1}E)^{-(1-\alpha)}.
$$
Since
$$
\lim_{p\to\infty}(A^p\,\#_\alpha\,E)^{1/p}
=\lim_{p\to\infty}(EA^{-p}E)^{-(1-\alpha)/p}
=\lim_{p\to\infty}(E(A^{1-\alpha})^{-p}E)^{-1/p},
$$
it follows from Theorem \ref{T-2.9} that
$$
\lim_{p\to\infty}(A^p\#_\alpha E)^{1/p}
=\sum_{k=1}^ma_k^{1-\alpha}P_{\widetilde\cM_k},
$$
where
\begin{align*}
\widetilde\cM_k&:=\ran E(P_k+\dots+P_m)E\ominus\ran E(P_{k+1}+\dots+P_m)E,
\quad1\le k\le m-1, \\
\widetilde\cM_m&:=\ran EP_mE.
\end{align*}
From Lemma \ref{L-3.7} below we have
$$
\widetilde\cM_1=\ran E\ominus\ran EP_1^\perp E=\ran P_1\wedge E,
$$
and for $2\le k\le m$,
\begin{align*}
\widetilde\cM_k&=\ran E(P_1+\dots+P_{k-1})^\perp E
\ominus\ran E(P_1+\dots+P_k)^\perp E \\
&=\bigl[\ran E\ominus\ran E(P_1+\dots+P_k)^\perp E\bigr]
\ominus\bigl[\ran E\ominus\ran E(P_1+\dots+P_{k-1})^\perp E\bigr] \\
&=\ran(P_1+\dots+P_k)\wedge E\ominus\ran(P_1+\dots+P_{k-1})\wedge E \\
&=\ran\bigl[(P_1+\dots+P_k)\wedge E-(P_1+\dots+P_{k-1})\wedge E\bigr].
\end{align*}
Therefore, \eqref{F-3.9} is established when $A$ is positive definite.

Next, when $A$ is not positive definite, let $P_{m+1}:=(P_1+\dots+P_m)^\perp$. For any
$\eps\in(0,a_m)$ define $A_\eps:=A+\eps P_{m+1}$. Then the above case implies that
$$
\lim_{p\to\infty}(A_\eps\#_\alpha E)^{1/p}
=\sum_{k=1}^ma_k^{1-\alpha}Q+\eps^{1-\alpha}Q_{m+1},
$$
where
$$
Q_{m+1}:=E-(P_1+\dots+P_m)\wedge E.
$$
Assume that $0<\eps<\eps'<a_m$. For every $p\ge1$, since $A_\eps^p\le A_{\eps'}^p$, we have
$A_\eps^p\#_\alpha E\le A_{\eps'}^p\#_\alpha E$ and hence
$(A_\eps^p\#_\alpha E)^{1/p}\le(A_{\eps'}^p\#_\alpha E)^{1/p}$. Furthermore, since
$A_\eps^p\#_\alpha E\to A^p\#_\alpha E$ as $a_m>\eps\searrow0$, we have
\begin{align}\label{F-3.10}
(A^p\#_\alpha E)^{1/p}=\lim_{a_m>\eps\searrow0}(A_\eps^p\#_\alpha E)^{1/p}
\quad\mbox{decreasingly}.
\end{align}
Now, we can perform a calculation of limits as follows:
\begin{align*}
\lim_{1\le p\to\infty}(A^p\#_\alpha E)^{1/p}
&=\lim_{1\le p\to\infty}\,\lim_{a_m>\eps\searrow0}(A_\eps^p\#_\alpha E)^{1/p} \\
&=\lim_{a_m>\eps\searrow0}\,\lim_{1\le p\to\infty}(A_\eps^p\#_\alpha E)^{1/p} \\
&=\lim_{a_m>\eps\searrow0}\Biggl(\sum_{k=1}^ma_k^{1-\alpha}Q+\eps^{1-\alpha}Q_{m+1}\Biggr) \\
&=\sum_{k=1}^ma_k^{1-\alpha}Q_k.
\end{align*}
In the above, the second equality (the exchange of two limits) is confirmed as follows.
Let $X_{p,\eps}:=(A_\eps^p\#_\alpha E)^{1/p}$ for $p\ge1$ and $0<\eps<a_m$. Then $X_{p,\eps}$
is decreasing as $1\le p\nearrow\infty$ by Theorem \ref{F-3.3} and also decreasing as
$a_m>\eps\searrow0$ as seen in \eqref{F-3.10}. Let $X_p:=\lim_\eps X_{p,\eps}$
($=(A^p\#_\alpha E)^{1/p}$), $X_\eps:=\lim_pX_{p,\eps}$, and $X:=\lim_pX_p$. Since
$X_{p,\eps}\ge X_p$, we have $X_\eps\ge X$ and hence $\lim_\eps X_\eps\ge X$. On the other
hand, since $X_{p,\eps}\ge X_\eps$, we have $X_p\ge\lim_\eps X_\eps$ and hence
$X\ge\lim_\eps X_\eps$. Therefore, $X=\lim_\eps X_\eps$, which gives the assertion.
\end{proof}

In particular, when $A=P$ is an orthogonal projection, we have
$(A^p\#_\alpha E)^{1/p}=P\wedge E$ for all $p>0$ (see \cite[Theorem 3.7]{KA}) so that both
sides of \eqref{F-3.9} are certainly equal to $P\wedge E$.

\begin{lemma}\label{L-3.7}
For every orthogonal projections $E$ and $P$,
$$
\ran EP^\perp E=\ran(E-P\wedge E),
$$
or equivalently,
$$
\ran P\wedge E=\ran E\ominus\ran EP^\perp E.
$$
\end{lemma}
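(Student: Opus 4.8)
The plan is to reduce the whole statement to an explicit identification of the kernel of the positive operator $EP^\perp E$. First I would record the algebraic identity $EP^\perp E=E(I-P)E=E-EPE$, which shows that $EP^\perp E$ is positive semidefinite with $0\le EP^\perp E\le E$; in particular $\ran(EP^\perp E)\subset\ran E$ and $\ker(EP^\perp E)\supset\ran E^\perp:=(\ran E)^\perp$.

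Next I would pin down the kernel precisely. For a unit vector $x\in\ran E$ one computes
$$
\<x,EP^\perp Ex\>=\<x,x\>-\<Ex,PEx\>=\|x\|^2-\|Px\|^2\ge0,
$$
with equality if and only if $\|Px\|=\|x\|$, i.e. $Px=x$, i.e. $x\in\ran P$. Since $EP^\perp E$ is positive, $x\in\ker(EP^\perp E)$ exactly when $\<x,EP^\perp Ex\>=0$, so restricting to $\ran E$ one gets that the kernel of $EP^\perp E$ inside $\ran E$ is $\ran E\cap\ran P=\ran(P\wedge E)$. Decomposing an arbitrary vector along $\ran E\oplus\ran E^\perp$ then yields
$$
\ker(EP^\perp E)=\ran E^\perp\oplus\ran(P\wedge E).
$$

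Finally, since $EP^\perp E$ is self-adjoint, $\ran(EP^\perp E)=\ker(EP^\perp E)^\perp=\ran E\ominus\ran(P\wedge E)$. On the other hand, because $P\wedge E\le E$, the operator $E-P\wedge E$ is the orthogonal projection onto $\ran E\ominus\ran(P\wedge E)$, so $\ran(E-P\wedge E)=\ran E\ominus\ran(P\wedge E)$ as well; comparing the two gives both displayed equalities in the statement. I expect the only delicate point — really the crux of the argument — to be the strict case of the Cauchy–Schwarz/norm inequality $\|Px\|\le\|x\|$ (equality forces $Px=x$), combined with the standard fact that a positive operator $T$ kills $x$ iff $\<x,Tx\>=0$; the remainder is bookkeeping with ranges, kernels, and orthogonal complements in finite dimensions.
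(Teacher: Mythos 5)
Your argument is correct, but it takes a genuinely different route from the paper's. The paper invokes the canonical simultaneous block representation of the two projections $E$ and $P$ (the Halmos two-subspace decomposition, cited from Takesaki), writes $E$, $P$, $P\wedge E$, $P^\perp$ and $EP^\perp E$ explicitly in that form, and simply reads off the two ranges. You instead avoid the structure theorem entirely: you compute the kernel of the positive operator $EP^\perp E$ via the quadratic form $\<x,EP^\perp Ex\>=\|x\|^2-\|Px\|^2$ for $x\in\ran E$, use that a positive operator annihilates $x$ exactly when the form vanishes, and that equality in $\|Px\|\le\|x\|$ forces $Px=x$ (which is immediate from $\|x\|^2=\|Px\|^2+\|P^\perp x\|^2$), arriving at $\ker(EP^\perp E)=(\ran E)^\perp\oplus\ran(P\wedge E)$ and hence, by self-adjointness, at $\ran(EP^\perp E)=\ran E\ominus\ran(P\wedge E)=\ran(E-P\wedge E)$. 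All steps check out, including the observation that $E-P\wedge E$ is itself the orthogonal projection onto $\ran E\ominus\ran(P\wedge E)$ because $P\wedge E\le E$. Your approach is more elementary and self-contained; the paper's explicit block form costs an appeal to the two-projection theorem but delivers the full spectral picture of $EP^\perp E$ (the $S^2$ block) rather than just its range, information that is not needed for this lemma. One caveat if you care about generality: in infinite dimensions your identification of $\ran(EP^\perp E)$ with $\ker(EP^\perp E)^\perp$ would only give the closure of the range, whereas in the finite-dimensional setting of the paper this is harmless.
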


\begin{proof}
According to the well-known representation of two projections (see \cite[pp.~306--308]{Ta}),
we write
\begin{align*}
E&=I\oplus I\oplus0\oplus\begin{bmatrix}I&0\\0&0\end{bmatrix}\oplus0, \\
P&=I\oplus0\oplus I\oplus\begin{bmatrix}C^2&CS\\CS&S^2\end{bmatrix}\oplus0,
\end{align*}
where $0<C,S<I$ with $C^2+S^2=I$. We have
$$
P\wedge E=I\oplus0\oplus0\oplus0\oplus0.
$$
Since
$$
P^\perp=0\oplus I\oplus0\oplus\begin{bmatrix}S^2&-CS\\-CS&C^2\end{bmatrix}\oplus I,
$$
we also have
$$
EP^\perp E=0\oplus I\oplus0\oplus\begin{bmatrix}S^2&0\\0&0\end{bmatrix}\oplus0,
$$
whose range is that of
$$
0\oplus I\oplus0\oplus\begin{bmatrix}I&0\\0&0\end{bmatrix}\oplus0=E-P\wedge E,
$$
which yields the conclusion.
\end{proof}

\subsection*{Acknowledgments}

This work was supported by JSPS KAKENHI Grant Number JP17K05266.

\end{document}